\newtheorem{lemma}{Lemma}
\newtheorem{proposition}{Proposition}
\newtheorem{corollary}{Corollary}
\newtheorem{theorem}{Theorem}
\newenvironment{smatrix}[1][r]{\left(\begin{smallmatrix*}[#1]}{\end{smallmatrix*}\right)}
\renewcommand\subsubsection{\@startsection{subsubsection}{3}{\z@}%
  {-3.25ex\@plus -1ex \@minus -.2ex}%
  {-1.5ex \@plus .2ex}%
  {\normalfont\normalsize\bfseries}}
\let\orgdescriptionlabel\descriptionlabel
\renewcommand*{\descriptionlabel}[1]{%
  \let\orglabel\label
  \let\label\@gobble
  \phantomsection
  \edef\@currentlabel{#1}%
  \let\label\orglabel
  \orgdescriptionlabel{\bfseries #1}%
}
\newcommand{\1}{{\bf 1}} 
\DeclareMathOperator{\Dirac}{Dirac}
\DeclareMathOperator{\expit}{expit}
\DeclareMathOperator{\logit}{logit}
\DeclareMathOperator{\argmin}{argmin}
\newcommand{\calR}{\mathcal{R}}
\newcommand{\xA}{{\cal A}}
\newcommand{\xF}{{\cal F}}
\newcommand{\xG}{{\cal G}}
\newcommand{\pG}{\mathbb{G}_n^{h}}
\newcommand{\lG}{\mathbb{G}^{h}}
\newcommand{\xM}{\mathscr{M}}
\newcommand{\xMu}{{\cal M}}
\newcommand{\xP}{\mathbb{P}}
\newcommand{\xQ}{{\cal Q}}
\newcommand{\xR}{\mathbb{R}}
\newcommand{\xS}{\mathbb{S}}
\newcommand{\xT}{\mathcal{T}}
\newcommand{\T}{T_{N}}
\newcommand{\xW}{{\cal W}}
\newcommand{\xV}{{\cal V}}
\newcommand{\xX}{{\cal X}}
\newcommand{\xO}{{\cal O}}
\newcommand{\PHT}{P_{R_{N}}^{\boldsymbol p}}
\newcommand{\PHTW}{P_{R_{N},W}^{\boldsymbol p}}
\newcommand{\PHTX}{P_{R_{N},X}^{\boldsymbol p}}
\newcommand{\prob}[1]{P\left(#1\right)} 
\newcommand{\probi}[2]{P_{#1}\left(#2\right)} 
\newcommand{\esp}[1]{E\left[#1\right]} 
\newcommand{\espi}[2]{E_{#1}\left[#2\right]} 
\newcommand{\vari}[2]{\text{Var}_{#1}\left(#2\right)} 
\begin{document}
\title{Practical targeted learning from large data sets by survey sampling}
\author{P. Bertail, A. Chambaz, E. Joly\\
  Modal'X, Universit\'e Paris Ouest Nanterre}
\maketitle

\begin{abstract}
  We address the practical construction of asymptotic confidence intervals for
  smooth  ({\em  i.e.},   pathwise  differentiable),  real-valued  statistical
  parameters by targeted learning from independent and identically distributed
  data in contexts  where sample size is so large  that it poses computational
  challenges.   We observe  some  summary measure  of all  data  and select  a
  sub-sample from  the complete  data set by  Poisson rejective  sampling with
  unequal  inclusion probabilities  based on  the summary  measures.  Targeted
  learning is carried  out from the easier to handle  sub-sample.  We derive a
  central limit theorem  for the targeted minimum loss  estimator (TMLE) which
  enables  the  construction  of  the  confidence  intervals.   The  inclusion
  probabilities  can be  optimized to  reduce the  asymptotic variance  of the
  TMLE.  We illustrate the procedure with two examples where the parameters of
  interest  are  variable  importance  measures  of  an  exposure  (binary  or
  continuous) on an outcome. We also conduct a simulation study and
  comment on its results. \\~\\
  keywords: semiparametric  inference; survey sampling; targeted  minimum loss
  estimation (TMLE) \\~\\
\end{abstract}


\section{Introduction}
\label{sec:intro}


Large  data  sets  are  ubiquitous  nowadays.   They  pose  computational  and
theoretical challenges.   We consider the  particular problem of  carrying out
inference     based      on     semiparametric     models      by     targeted
learning~\cite{vanderLaan06,TMLEbook} from  large data  sets.  We  mainly deal
with the fact  that the sample size $N$  is, say, huge.  Even if  we also take
advantage of  easy to handle summary  measures of the observations,  we do not
consider the specific difficulties yielded by  the messiness of real big data.
This is why we use the expression ``large data sets'' instead of ``big data''.

Confronted with large  data sets, many learning algorithms  fail to provide an
answer in a reasonable time if at all.  Following \cite{bertail:hal-00989585},
we overcome this computational limitation by {\em (i)} selecting $n$ among $N$
observations  with  unequal probabilities  and  {\em  (ii)} adapting  targeted
learning from this smaller, tamed data set.

Specifically,  our objective  is to  enable the  construction of  a confidence
interval   with   given  asymptotic   level   for   a  statistical   parameter
$\psi_{0}\equiv \Psi(P_{0})$  based on  a sample $O_{1},  \ldots, O_{N}$  of a
(huge number) $N$ of  independent and identically distributed (i.i.d.)  random
variables drawn  from $P_{0}  \in \xM$,  where $\Psi:\xM \to  \xR$ maps  a set
$\xM$  of measures  including possible  distributions of  $O_{1}$ to  the real
line.   We focus  on the  case that  the functional  $\Psi$ is  smooth  in the
following  sense.   For every  $P  \in  \xM$, there  exists  a  wide class  of
one-dimensional   paths  $\{P_{t}  :   t  \in   ]-c,c[\}  \subset   \xM$  with
$P_{t}|_{t=0} =  P$ and  an influence function  $D(P) \in L_{0}^{2}  (P)$ such
that, for all $|t| < c$,
\begin{eqnarray}
  \label{eq:linear:exp}
  \Psi(P_{t}) &  = & \Psi(P)  + \int D(P)  (dP_{t} - dP) +  o(t) \\
  \notag 
  & = & \Psi(P) + \int D(P) dP_{t} + o(t). 
\end{eqnarray}
Here,  we denote  $L_{0}^{2} (P)$  the set  of centered  and square-integrable
measurable functions relative to $P$.

Condition (\ref{eq:linear:exp}) trivially holds when $\Psi$ is linear. If, for
instance, $\Psi$  is given by $\Psi(P)  \equiv \int f dP$  for some measurable
function  $f$ integrable with  respect to  (wrt) all  elements of  $\xM$, then
(\ref{eq:linear:exp})  holds  with $D(P)  \equiv  f  -  \Psi(P)$ (without  the
$o$-term).  Even  in the  very simple  example where $f$  is the  identity and
$\xM$ consists of probability measures, hence $\Psi(P) = \espi{P}{O}$,
it may be computationally difficult,  if not impossible, to build a confidence
interval  for  $\psi_{0}=\Psi(P_{0})$  {\em  using all  observations},  merely
because it  may be  very challenging  to {\em access}  to all  of them  in the
context of large data sets. 

Typical  examples  of  functionals  satisfying  (\ref{eq:linear:exp})  include
pathwise        differentiable         functionals        as        introduced
in~\citep[][Section~25.3]{vdV98}.    We  will  give   two  examples   of  such
functionals.   Pathwise  differentiability   differs  slightly  from  Gateaux,
Hadamard and Fr\'echet differentiability. It is  one the of key notions in the
theory of semiparametric inference.

We  overcome  the  computational  hurdle  by  resorting  to  survey  sampling,
specifically  to rejective  sampling based  on Poisson  sampling with  unequal
inclusion  probabilities.   It  is  a  particular  case  of  sampling  without
replacement (we refer to~\cite{hanif1980sampling}  for an overview on sampling
without replacement). Survey sampling can also  rely on the so called sampling
entropy  \cite{berger1998rate,brewer2003high,grafstrom2010entropy}, but  we do
not  follow this  path. Also  known  as Sampford  sampling, rejective  Poisson
sampling  has been  thoroughly studied  for the  last five  decades since  the
publication  of  the  seminal  articles~\cite{hajek1964,sampford1967sampling}.
The key  object in the analysis  of Sampford sampling is  the Horvitz-Thompson
(HT) empirical  measure.  Asymptotic normality  of estimators based on  the HT
empirical  measure was  first  established  in~\cite{hajek1964}. A functional version for the cumulative distribution function was obtained by \citep{wang2012sample} . Our  analysis hinges on the recent  study of the HT empirical measure  from the viewpoint of
empirical  processes  theory  carried out  in~\cite{bertail:hal-00989585}  (we
refer the reader to this article for additional references).

For instance \citep{cardot2013confidence,CARDEGOJOLA2013} show practically how to implement confidence bands for model-assisted estimators of the mean when the variable of interest is functional and storage capacities are limited (with applications to electricity consumption curves). In that case, survey sampling techniques are interesting alternative to signal compression techniques. 

The joint use of survey sampling techniques in conjunction with semiparametric
models for inference  is not new~\cite{breslow2007weighted,Breslow08}.  To the
best of our knowledge, however, this is the first attempt to take advantage of
survey sampling to enable targeted learning when the data set is so large that
computational problems arise. In contrast to naive sub-sampling, sampling designs with unequal
probabilities offer a control over the efficiency of estimators. In this  light, we propose an alternative to the
so called online version of targeted learning~\cite{onlineTMLE}.

\subsubsection*{Organization.}

Section~\ref{sec:main} presents our procedure  for practical targeted learning
from large  data sets by survey  sampling and the central  limit theorem which
enables the  construction of confidence  intervals. Section~\ref{sec:examples}
illustrates Section~\ref{sec:main} with two  examples, where the parameters of
interest are variable importance measures of a (binary or continuous) exposure
on an outcome.   Section~\ref{sec:sim} summarizes the results  of a simulation
study. The proofs are given in appendix.

\section{Practical targeted learning}
\label{sec:main}

Throughout  the   article,  we  denote  $\mu   f  \equiv  \int   f  d\mu$  and
$\|f\|_{2,\mu} \equiv  (\mu f^{2})^{1/2}$ for  any measure $\mu$  and function
$f$ (measurable and integrable wrt $\mu$).


\subsection{Survey sampling  from the large  data set and construction  of the
  estimator}
\label{sec:surv:samp}

\subsubsection*{Rejective sampling.}

Let $n(N)$ be a deterministic,  user-supplied number of observations to select
by survey sampling.  It is a practical, computationally  tractable sample size
as opposed to  the unpractical, huge $N$.  Because  our results are asymptotic
we impose that, as $N \to \infty$,
\begin{equation*}
  n (N) \to \infty \quad \text{and} \quad \frac{n(N)}{N} \to 0.  
\end{equation*}
In the rest of this article, we will simply denote $n$ for $n(N)$.  

We   employ  a  specific   survey  sampling   scheme  called   {\em  rejective
  sampling}~\cite{hajek1964,bertail:hal-00989585}.   The  random selection  of
observations  from the  complete  data  set can  depend  on easily  accessible
summary measures  $V_{1}, \ldots, V_{N}  \in \xV$ attached to  $O_{1}, \ldots,
O_{N}$. Typically, $V_{1}, \ldots,  V_{N}$ take finitely many different values
or   are  low-dimensional,  and   the  implementation   of  the   database  is
structured/organized based on the values of $V_{1}, \ldots, V_{N}$.

Let $h$ be  a (measurable) function on $\xV$ such  that $h(\xV) \subset [c(h),
\infty)$ for some constant $c(h)>0$. For each $1 \leq i \leq N$, define
\begin{equation*}
  p_i \equiv \frac{nh(V_i)}{N}.
\end{equation*}
For $N$ large enough, $p_{1}, \ldots, p_{N} \in (0,1)$. Introduce
\begin{itemize}
\item   $\varepsilon_{1},   \ldots,   \varepsilon_{N}$  independently   drawn,
  conditionally on  $V_{1}, \ldots,  V_{N}$, from the  Bernoulli distributions
  with parameters $p_{1}, \ldots, p_{N}$, respectively;
\item $(\eta_{1},  \ldots, \eta_{N})$ drawn, conditionally  on $V_{1}, \ldots,
  V_{N}$,  from  the conditional  distribution  of $(\varepsilon_{1},  \ldots,
  \varepsilon_{N})$ given $\sum_{i=1}^{N} \varepsilon_{i} = n$.
\end{itemize}

The  subset of  $n$ observations  randomly selected  by rejective  sampling is
$\{O_{i} : \eta_{i}=1, 1 \leq i \leq N\}$.
It is associated with the so-called HT empirical measure defined by
\begin{equation}
  \label{eq:PHT}
  \PHT \equiv \frac{1}{N}\sum_{i=1}^N \frac{\eta_i}{p_i}\Dirac(O_i).
\end{equation}
Note that $\PHT$  is not necessarily a probability  measure.  However, if
$h \equiv 1$ then $\PHT$ is a probability measure, and rejective sampling
is equivalent to selecting $n$ observations among $O_1,\dots,O_N$ uniformly.

For   computational   reasons,   it   is   not  desirable   that   the   event
``$\sum_{i=1}^{N}   \varepsilon_{i}  =  n$''   be  too   unlikely.   Lemma~3.1
in~\citep{hajek1964}  shows  that the  conditional  probability  of the  event
``$\sum_{i=1}^{N}  \varepsilon_{i} =  k$'' is  maximized when  $k$  equals the
conditional expectation of $\sum_{i=1}^{N} \varepsilon_{i}$, in which case the
conditional probability is asymptotically equivalent to $(2 \pi \sum_{i=1}^{N}
p_{i}  (1-p_{i}))^{-1/2}$.   Because the  conditional  expectation of  $n^{-1}
\sum_{i=1}^{N}  \varepsilon_{i}$ equals  $n^{-1}\sum_{i=1}^{N} p_{i}  = N^{-1}
\sum_{i=1}^{N}   h(V_{i})$,   which   converges   $P_{0}$-almost   surely   to
$\espi{P_{0}}{h(V)}$, it is thus good  practice to choose function $h$ in such
a way  that $\espi{P_{0}}{h(V)}$ be close  to 1.  When  $V_{1}, \ldots, V_{N}$
take  finitely  many different  values,  it  is  easy to  estimate  accurately
$\espi{P_{0}}{h(V)}$ on an independent sample  and, therefore, to adapt $h$ so
that $\espi{P_{0}}{h(V)} \approx 1$.

\subsubsection*{Practical, targeted estimator.}

Assume that we have constructed  $P_{n}^{*} \in \xM$ targeted to $\psi_{0}$ in
the sense that
\begin{equation}
  \label{eq:targeted}
  \PHT D(P_{n}^{*}) = o_{P} (1/\sqrt{n}).
\end{equation}
We define $\psi_{n}^{*} \equiv \Psi(P_{n}^{*})$ as our substitution estimator.
This  construction  frames $\psi_{n}^{*}$  in  the  paradigm  of the  targeted
minimum loss estimation methodology~\citep{vanderLaan&Rubin06,TMLEbook}.

\subsection{Main theorem}
\label{sec:theorem}

Consider a class  $\xF$ of functions mapping a measured  space $\xX$ to $\xR$.
Set   $\delta  >   0$  and   a  semi-metric   $d$  or   a  norm.    We  denote
$N(\varepsilon, \xF,d)$  the $\varepsilon$-covering  number of $\xF$  wrt $d$,
{\em i.e.}, the minimum number of  $d$-balls of radius $\varepsilon$ needed to
cover  $\xF$.   The corresponding  entropy  integral  for $\xF$  evaluated  at
$\delta$                                                                    is
$J(\delta, \xF,  d) \equiv  \int_{0}^{\delta} \sqrt{\log N(\varepsilon  , \xF,
  d)}d\varepsilon$.

Let $\calR: \xM^{2} \to \xR$ be given by
\begin{equation}
  \label{eq:second:order}
  \calR(P, P') \equiv \Psi(P') - \Psi(P) - \int D(P) (dP' - dP)
\end{equation}
where the  influence function $D(P)$ is  defined before (\ref{eq:linear:exp}).
The real number $\calR(P_{n}^{*}, P_{0})$ can be interpreted as a second-order
term in an  expansion of $\psi_{n}^{*} = \Psi(P_{n}^{*})$  around $P_{0}$.  By
(\ref{eq:linear:exp}),   we   focus    on   functionals   $\Psi$   such   that
$\calR(P,P_{t}) = o (t)$ for a  wide class of one-dimensional paths $\{P_{t} :
t \in  ]-c,c[\} \subset \xM$ such  that $P_{t}|_{t=0} = P$.  This statement is
clarified in the examples of Section~\ref{sec:examples}.

We suppose the existence of $\xF  \subset \{D(P) : P \in \xM\}$ satisfying the
three following assumptions:
\begin{description}
\item[A1  (complexity)]  $\xF$ is  separable,  for  every  $f\in \xF$,  $P_{0}
  f^{2}h^{-1} < \infty$, and $J(1, \xF, \|\cdot\|_{2,P_{0}}) < \infty$.
\item[A2 (uniform  convergence of empirical metric)] For  every $f,f'\in \xF$,
  if
  \begin{equation}
    \label{eq:rhoN}
    \rho_{N}^2(f,f')\equiv\frac{1}{N}\sum_{i=1}^N (f(O_i)-f'(O_i))^2
  \end{equation}
  then, $P_{0}$-almost surely,
  \[
  \sup_{f,f'\in\xF}      \left|\frac{\rho_N(f,f')}{\|f-f'\|_{2,P_{0}}}-1\right|
  \underset{N \to \infty}{\longrightarrow} 0.
  \]
\item[A3  (first order  convergence)] With  $P_{0}$-probability tending  to 1,
  $D(P_{n}^{*})  \in  \xF$,  and  there  exists  $f_{1}  \in  \xF$  such  that
  $\|D(P_{n}^{*})  - f_{1}\|_{2,P_{0}}  = o_{P}  (1)$. Moreover,  one  knows a
  conservative   estimator  $\Sigma_{n}$   of  $\sigma_{1}^{2}   \equiv  P_{0}
  f_{1}^{2}h^{-1}$.
\end{description}
Under {\bf A1}, we can define $\Sigma : \xF^{2} \to \xR$ given by
\begin{equation}
  \label{eq:Sigma} 
  \Sigma(f,f')  \equiv  P_{0}    ff'h^{-1}.
\end{equation}
In particular, $\sigma_{1}^{2}$ in {\bf A3} equals $\Sigma(f_{1}, f_{1})$.  An
additional assumption is needed:
\begin{description}
\item[A4  (second order  term)]  There exists  a  real-valued random  variable
  $\gamma_{n}$ converging in probability to  $\gamma_{1} \neq 1$ and such that
  $\gamma_{n}  (\psi_{n}^{*} - \psi_{0})  + \calR  (P_{n}^{*}, P_{0})  = o_{P}
  (1/\sqrt{n})$.   Moreover, one  knows  an estimator  $\Gamma_{n}$ such  that
  $\Gamma_{n} - \gamma_{n} = o_{P}(1)$.
\end{description}
We can now state our main theorem.
\begin{theorem}
  \label{thm:main}
  Assume that {\bf A1}, {\bf A2}, {\bf A3} and {\bf A4} are met. Then it holds
  that $(1-\gamma_{n}) \sqrt{n} (\psi_{n}^{*} - \psi_{0})$ converges in law to
  the   centered  Gaussian   distribution   with  variance   $\sigma_{1}^{2}$.
  Consequently, for any $\alpha \in (0,1)$,
  \begin{equation*}
    \left[\psi_{n}^{*}           \pm                    
      \frac{\xi_{1-\alpha/2} \sqrt{\Sigma_{n}}}{(1-\Gamma_n)\sqrt{n}} \right] 
  \end{equation*}
  is a confidence interval with asymptotic coverage no less than $(1-\alpha)$.
\end{theorem}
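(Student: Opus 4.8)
The plan is to reduce the claim to a central limit theorem for a single Horvitz--Thompson average and then to invoke the empirical process theory of \cite{bertail:hal-00989585}. First I would unfold the definition (\ref{eq:second:order}) of $\calR$ at $(P_{n}^{*},P_{0})$ and use that influence functions are centered, $P_{n}^{*}D(P_{n}^{*})=0$, to obtain the exact identity $\psi_{n}^{*}-\psi_{0} = -\calR(P_{n}^{*},P_{0}) - P_{0}D(P_{n}^{*})$. Inserting the targeting condition (\ref{eq:targeted}) rewrites $-P_{0}D(P_{n}^{*})$ as $(\PHT-P_{0})D(P_{n}^{*}) - \PHT D(P_{n}^{*}) = (\PHT-P_{0})D(P_{n}^{*}) + o_{P}(1/\sqrt{n})$, and eliminating $\calR(P_{n}^{*},P_{0})$ by means of {\bf A4} yields, after collecting terms and multiplying by $\sqrt{n}$,
\[
(1-\gamma_{n})\sqrt{n}(\psi_{n}^{*}-\psi_{0}) = \sqrt{n}(\PHT-P_{0})D(P_{n}^{*}) + o_{P}(1).
\]
Since $\gamma_{1}\neq 1$, it therefore suffices to establish that the right-hand side converges in law to $\mathcal{N}(0,\sigma_{1}^{2})$.

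Next I would split the Horvitz--Thompson fluctuation as $\sqrt{n}(\PHT-P_{0}) = \sqrt{n}(\PHT-\xP_{N}) + \sqrt{n}(\xP_{N}-P_{0})$, writing $\xP_{N}\equiv N^{-1}\sum_{i}\Dirac(O_{i})$ for the ordinary empirical measure. The second piece, evaluated at $D(P_{n}^{*})$, equals $\sqrt{n/N}$ times $\sqrt{N}(\xP_{N}-P_{0})D(P_{n}^{*})$; the latter is $O_{P}(1)$ because {\bf A1} makes $\xF$ a $P_{0}$-Donsker class, and since $n/N\to 0$ this piece is $o_{P}(1)$. All the mass thus sits in the genuine survey-sampling term $\sqrt{n}(\PHT-\xP_{N})D(P_{n}^{*})$, which is the Horvitz--Thompson empirical process $\pG$ of \cite{bertail:hal-00989585} evaluated at $D(P_{n}^{*})$.

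The heart of the proof --- and the step I expect to be the main obstacle --- is to pass from this random, data-dependent integrand to a fixed limit and to read off the variance. Here I would import the functional central limit theorem of \cite{bertail:hal-00989585} for $\pG$ over $\xF$, whose hypotheses are precisely {\bf A1} (separability, the weighted moment $P_{0}f^{2}h^{-1}<\infty$, and the finite entropy integral) and {\bf A2} (the $P_{0}$-almost sure uniform convergence of the empirical semi-metric $\rho_{N}$ to $\|\cdot\|_{2,P_{0}}$, which is exactly what couples the design-based fluctuations to the $L^{2}(P_{0})$ geometry). That theorem gives weak convergence of $\pG$ to a tight centered Gaussian process $\lG$ with covariance $\Sigma$, hence asymptotic $\|\cdot\|_{2,P_{0}}$-equicontinuity. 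Since {\bf A3} guarantees $D(P_{n}^{*})\in\xF$ with probability tending to one and $\|D(P_{n}^{*})-f_{1}\|_{2,P_{0}}=o_{P}(1)$, equicontinuity lets me replace $D(P_{n}^{*})$ by $f_{1}$ up to $o_{P}(1)$, so $\pG D(P_{n}^{*}) = \pG f_{1} + o_{P}(1)$ converges in law to $\mathcal{N}(0,\Sigma(f_{1},f_{1}))=\mathcal{N}(0,\sigma_{1}^{2})$. The delicate points are verifying that the cited theorem's conditions genuinely reduce to {\bf A1}--{\bf A2} and that the equicontinuity modulus really absorbs the stochastic integrand; this is where all the survey-sampling empirical-process machinery is spent. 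Any gap between the Poisson-type variance $\sigma_{1}^{2}$ and the exact, possibly smaller, rejective variance is harmless and is precisely what the word \emph{conservative} in {\bf A3} anticipates.

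Finally, for the confidence interval I would combine the central limit theorem with Slutsky's lemma. Because $1-\gamma_{1}\neq 0$, the display above forces $\sqrt{n}(\psi_{n}^{*}-\psi_{0})=O_{P}(1)$, and $\Gamma_{n}-\gamma_{n}=o_{P}(1)$ (from {\bf A4}) then gives $(1-\Gamma_{n})\sqrt{n}(\psi_{n}^{*}-\psi_{0})\rightsquigarrow\mathcal{N}(0,\sigma_{1}^{2})$. Dividing by $\sqrt{\Sigma_{n}}$, where $\Sigma_{n}$ is a conservative estimator of $\sigma_{1}^{2}$ (its probability limit being at least $\sigma_{1}^{2}$), the standardized statistic converges to a centered Gaussian of variance at most one, so that asymptotically $P(|(1-\Gamma_{n})\sqrt{n}(\psi_{n}^{*}-\psi_{0})|\le \xi_{1-\alpha/2}\sqrt{\Sigma_{n}})\ge 1-\alpha$. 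Rewriting this event as $\psi_{0}\in[\psi_{n}^{*}\pm \xi_{1-\alpha/2}\sqrt{\Sigma_{n}}/((1-\Gamma_{n})\sqrt{n})]$ yields the announced coverage of no less than $1-\alpha$.
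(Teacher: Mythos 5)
Your opening and closing steps coincide with the paper's own proof: you use $P_{n}^{*}D(P_{n}^{*})=0$ together with (\ref{eq:targeted}) and \textbf{A4} to arrive at $(1-\gamma_{n})\sqrt{n}(\psi_{n}^{*}-\psi_{0})=\sqrt{n}(\PHT-P_{0})D(P_{n}^{*})+o_{P}(1)$, you replace the random integrand $D(P_{n}^{*})$ by $f_{1}$ via asymptotic equicontinuity plus \textbf{A3} (the paper does exactly this, through the concentration bound of van der Vaart--Wellner, Corollary~2.2.8, on the shells $\xF_{\delta}$), and you finish with Slutsky. Where you genuinely diverge is the central limit theorem itself. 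The paper proves, as Theorem~\ref{thm:result_sondage}, a functional CLT for the process $\sqrt{n}(\PHT-P_{0})$ \emph{centered at $P_{0}$} directly, via H\'ajek's coupling of rejective with Poisson sampling, negative association of $\eta_{1},\dots,\eta_{N}$, a Hoeffding inequality for negatively associated variables, and chaining in the random metric $\rho_{N}$ that \textbf{A2} converts into the deterministic entropy of \textbf{A1}. You instead split $\sqrt{n}(\PHT-P_{0})$ into a design term $\sqrt{n}(\PHT-\xP_{N})$ and a sampling term $\sqrt{n/N}\cdot\sqrt{N}(\xP_{N}-P_{0})$, kill the latter using $n/N\to 0$, and import the FCLT for the design term as a black box. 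The decomposition is legitimate --- its variance bookkeeping is confirmed by (\ref{eq:decomp}) --- but it outsources precisely the step on which the paper spends its appendix; as a self-contained argument, the coupling machinery is absent from your proposal.

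Two of your assertions need correction. First, ``\textbf{A1} makes $\xF$ a $P_{0}$-Donsker class'' is unjustified: finiteness of $J(1,\xF,\|\cdot\|_{2,P_{0}})$, i.e., covering numbers in the single norm $L^{2}(P_{0})$, does not imply the Donsker property (one needs bracketing entropy, uniform entropy as in \textbf{A2*}, or the transfer device of \textbf{A2}). Because $D(P_{n}^{*})$ is random, you really do need a uniform $O_{P}(1)$ bound on $\sqrt{N}(\xP_{N}-P_{0})$ over $\xF$; this can be repaired by symmetrization plus the same $\rho_{N}$-chaining that \textbf{A2} licenses, but not from \textbf{A1} alone. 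Second, your closing remark that a possible gap between the Poisson-type variance $\sigma_{1}^{2}$ and a ``possibly smaller'' rejective variance is absorbed by the word \emph{conservative} in \textbf{A3} misreads both sides: H\'ajek's coupling shows the rejective and Poisson schemes agree asymptotically in this regime, so the limit variance is \emph{exactly} $\sigma_{1}^{2}$ --- which the theorem's statement requires --- and conservativeness in \textbf{A3} concerns the estimator $\Sigma_{n}$ of $\sigma_{1}^{2}$, not design slack. If the rejective asymptotic variance were genuinely smaller than $\sigma_{1}^{2}$, the claimed limit law would be false, so this point cannot be waved away; it must be (and, in the paper, is) proved.
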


\subsubsection*{Comments.}

Assumption  {\bf A1}  is typical  in semiparametric  inference, and  should be
interpreted    as    a    constraint    on   the    complexity    of    $\xF$.
Theorem~\ref{thm:main} relies on the  convergence of an empirical process, see
Theorem~\ref{thm:result_sondage}.             The           proof           of
Theorem~\ref{thm:result_sondage} uses a chaining argument, and {\bf A2} allows
to upper-bound the resulting {\em random} term $J(\delta, \xF, \rho_{N})$ by a
{\em  deterministic} term  $J(\delta, \xF,  \|\cdot\|_{2,P_{0}})$.
We say  that a class $\mathcal{C}$  has finite uniform entropy  integral if it
admits an envelope function $F$ and
\begin{equation*}
  \int_{0}^{\infty} \sup_{\rho} \sqrt{ \log N(\epsilon \|F\|_{2, \rho}, \mathcal{C},
    \|\cdot\|_{2,\rho})} d\epsilon < \infty,
\end{equation*}
where the supremum is over all  probability measures $\rho$ on $\xO$ such that
$\|F\|_{2,\rho} > 0$.  Assumption {\bf A2} can be replaced by the alternative
\begin{description}
\item[A2*] The class $\xF$ has a finite uniform entropy integral.
\end{description}
VC-classes    of    uniformly   bounded    functions    satisfy   {\bf    A2*}
\cite[][Section~2.6]{van1996weak}.   Finally,  {\bf   A3}  and  {\bf  A4}  are
technical conditions  required by  the TMLE procedure.   The former is  not as
mild as one  may think at first sight, because  the conservative estimation of
$\sigma_{1}^{2}$  is not  trivial.   For  instance, it  is  not guaranteed  in
general that the substitution estimator
\begin{equation}
  \label{eq:Sigma:n}
  \Sigma_{n}   \equiv    \PHT   D(P_{n}^{*})^2h^{-1}
\end{equation}
estimates  conservatively  $\sigma_{1}^{2}$.   Relying on  the  non-parametric
bootstrap is not a solution either in general.  

We argued  that $\calR(P_{n}^{*},  P_{0})$ should be  interpreted as  a second
order term.  In the simplest examples, this is literally the case and assuming
$\calR(P_{n}^{*}, P_{0})  = o_{P} (1/\sqrt{n})$  is natural, see  for instance
Section~\ref{subsec:binary}.   Sometimes,  $\calR(P_{n}^{*},  P_{0})$ must  be
corrected by adding $\gamma_{n} (\psi_{n}^{*}  - \psi_{0})$ so that it becomes
natural to assume  that the corrected expression is  $o_{P} (1/\sqrt{n})$, see
for instance Section~\ref{subsec:continuous}.

and {\bf A4}
is met  with $\gamma_{n} =  0$, see for  instance Section~\ref{subsec:binary}.
Allowing  $\gamma_{n}$   to  differ  from   0  gives  more   flexibility.   In
Section~\ref{sec:examples},  we give  additional conditions  which  imply {\bf
  A4}.

Knowing  the asymptotic  variance of  $(1-\gamma_{n})\sqrt{n}  (\psi_{n}^{*} -
\psi_{0})$ allows to discuss further the choice of $h$.  Introduce 
\begin{equation}
  \label{eq:f2}
  f_2(V)  \equiv \sqrt{\espi{P_0}{f_1(O)^2\middle|V}},
\end{equation}
which  satisfies $\sigma_{1}^{2}  = P_0  f_1^2 h^{-1}  =  P_0f_2^2h^{-1}$. The
Cauchy-Schwarz inequality yields
\begin{equation}
  \label{eq:CS}
  (P_0 f_2)^2 \le P_0 f_2^2h^{-1} \times P_0 h = \sigma_{1}^{2} \times P_{0} h,
\end{equation}
and equality  occurs when $f_2$ and  $h$ are linearly  dependent. Moreover, it
should hold that $P_{0}  h = 1$. In view of (\ref{eq:CS}),  the optimal $h$ is
$f_2/P_0 f_2$, assuming  that $P_{0} f_{2} > 0$  (otherwise, $\sigma_{1}^{2} =
0$).  This  argument neglects the  second-order dependence of  $\gamma_{n}$ on
$h$. In practice,  we would first sample $n_{0}$ data  using $h_{0} \equiv 1$,
use  them  to  estimate  $f_{2}$  and $P_{0}  f_{2}$  with  $f_{2,n_{0}}$  and
$Z_{2,n_{0}}$, then  finally define $h  \equiv f_{2,n_{0}} /  Z_{2,n_{0}}$ and
exclude the sampled data from $\{O_{1}, \ldots, O_{N}\}$.

The following expansion taken from  the proof of Theorem \ref{thm:main} partly
explains    why    $\sigma_{1}^{2}$   is    the    asymptotic   variance    of
$(1-\gamma_{n})\sqrt{n}    (\psi_{n}^{*}    -    \psi_{0})$:    denoting    by
$P_{0}^{\varepsilon}$  the shared  distribution of  $(O_{1}, \varepsilon_{1}),
\ldots, (O_{N}, \varepsilon_{N})$, it holds for any $f$ in $\xF$ that
\begin{eqnarray*}
  \vari{P_{0}^{\varepsilon}}{\frac{1}{N}\sum_{i=1}^N
    \frac{f(O_i)\varepsilon_i}{p_i}} & = & \frac{1}{N}
  \vari{P_{0}^{\varepsilon}}{\frac{f(O_{1})\varepsilon_{1}}{p_{1}}}\\
  & = & \frac{1}{N} \left( 
    \espi{P_{0}}{f^2(O_{1})\left(\frac{1}{p_{1}}-1\right)}      +
    \vari{P_{0}}{f(O)} \right).
\end{eqnarray*}
If,  contrary to  facts,  we  could take  $p_{1}\equiv  1$ (or,  equivalently,
$n\equiv N$  and $h\equiv 1$), then  the asymptotic variance  of the resulting
TMLE estimator would be of  the form $N^{-1}\vari{P_{0}}{f(O)}$ for some limit
$f$, as typically expected.   In Section~\ref{sec:surv:samp} $p_{1}$ is chosen
in such  a way that $1/p_{1}$  is typically much larger  than~1. Actually, the
above RHS expression at $f \equiv f_{1}$ rewrites
\begin{equation}
  \label{eq:decomp}
  \frac{1}{n}          \left(P_{0}         f_{1}^{2}          h^{-1}         +
    \frac{n}{N}(P_{0}f_{1})^{2}\right) = 
  \frac{1}{n} \left(\sigma_{1}^{2} + o(1)\right). 
\end{equation}
Note the absence of a centering term in $P_{0} f_1^{2} h^{-1}$.

\section{Two examples}
\label{sec:examples}

We  illustrate  Theorem~\ref{thm:main}  with  the inference  of  two  variable
importance     measures    of     an    exposure,     either     binary,    in
Section~\ref{subsec:binary},            or            continuous,           in
Section~\ref{subsec:continuous}.   In  both  examples, the  $i$th  observation
$O_{i}$ writes  $(W_{i}, A_{i},  Y_{i}) \in \xO  \equiv \xW \times  \xA \times
[0,1]$. Here,  $W_{i} \in \xW$  is the $i$th  context, $A_{i} \in \xA$  is the
$i$th exposure  and $Y_{i}  \in [0,1]$  is the $i$th  outcome.  In  the binary
case, $\xA \equiv \{0,1\}$.  In the  continuous case, $\xA \ni 0$ is a bounded
subset of $\xR$  containing 0, which serves as a  reference level of exposure.
Typically,  in biostatistics or  epidemiology, $W_{i}$  could be  the baseline
covariate describing the $i$th  subject, $A_{i}$ could describe her assignment
({\em e.g.}, treatment or placebo when $\xA = \{0,1\}$ or dose-level when $\xA
\subset \xR$) or exposure ({\em e.g.},  exposed or not when $\xA = \{0,1\}$ or
level  of exposure when  $\xA \subset  \xR$), and  $Y_{i}$ could  quantify her
biological response.

\subsection{Variable importance measure of a binary exposure}
\label{subsec:binary}

In this section, $\xA \equiv \{0,1\}$ and $\psi_{0}$ equals
\begin{equation}
  \label{eq:psi0:bin}
  \psi_{0}^{b}   \equiv  \espi{P_{0}}{\espi{P_{0}}{Y   \middle|   A=1,  W}   -
    \espi{P_{0}}{Y \middle| A=0, W}}
\end{equation}
(the superscript ``$b$'' stands for ``binary'').  Now, let $\xM$ be the subset
of the set of finite measures  on $\xO \equiv \xW \times \{0,1\} \times [0,1]$
equipped with the  Borel $\sigma$-field such that every $P  \in \xM$ puts mass
on all events of the form  $B_{1} \times \{a\} \times B_{2}$ ($a=0,1$, $B_{1}$
and $B_{2}$  Borel sets  of $\xW$ and  $[0,1]$).  It  contains the set  of all
possible data-generating  distributions for $O_{1}$ such  that the conditional
distribution of  $A$ given $W$  is not deterministic, including  $P_{0}$.  For
each $P  \in \xM$, we denote  $P_{W}$, $P_{A|W}$ and  $P_{Y|A,W}$ the marginal
measure of $W$ and conditional measures  of $A$ and $Y$ given $W$ and $(A,W)$,
respectively.   (The  conditional  measure  $P_{A|W}$ is  $P(\xO)$  times  the
conditional  law   of  $A$  given  $W$  under   the  probability  distribution
$P/P(\xO)$.  The conditional measure  $P_{Y|A,W}$ is defined analogously.)  We
see  $\psi_{0}^{b}$ as  the  value  at $P_{0}$  of  the functional  $\Psi^{b}$
characterized over $\xM$ by
\begin{equation}
  \label{eq:Psi:bin}
  \Psi^{b}                (P)                \equiv               \int_{\xW}
  \left(\int_{[0,1]} y \left(dP_{Y|A=1,W=w} (y) - dP_{Y|A=0,W=w}(y)\right)
  \right) dP_W (w).
\end{equation}

In particular,  if $P$  is a possible  data-generating {\em  distribution} for
$O_{1}$ ({\em i.e.}, if $P(\xO)=1$), then
\begin{equation*}
  \Psi^{b} (P) = \espi{P}{\espi{P}{Y \middle| A=1, W} -
    \espi{P}{Y \middle| A=0, W}}. 
\end{equation*}
Moreover,  under   additional  causal  assumptions,  $\Psi^{b}   (P)$  can  be
interpreted as  the additive  causal effect of  the exposure on  the response,
see~\citep{Pearl00,TMLEbook}.

Two infinite-dimensional features of every  $P \in \xM$ will play an important
role in the analysis.  Namely, for each  $P \in \xM$ and $(w,a) \in \xW \times
\xA$, we  introduce and denote  $g_{P} (0|w)\equiv P_{A|W=w}  (\{0\})$, $g_{P}
(1|w)  \equiv  P_{A|W=w} (\{1\})$,  and  $Q_{P}  (a,w)  \equiv \int_{[0,1]}  y
dP_{Y|A=a,W=w}(y)$.   In  particular  if   $P(\xO)=1$,  then  $g_{P}  (1|W)  =
P(A=1|W)$ is  the conditional probability  that the binary exposure  equal one
and  $Q_{P} (A,W)  = \espi{P}{Y|A,W}$  is the  conditional expectation  of the
response given exposure and context.

\subsubsection*{Pathwise differentiability.}

The functional $\Psi^{b}$  is pathwise differentiable at each  $P \in \xM$ wrt
the maximal tangent space $L_{0}^{2} (P)$ (the space of functions $s : \xO \to
\xR$  such  that  $Ps  =  0$   and  $Ps  ^{2}  <  \infty$)  in  the  following
sense~\citep[][Chapter~5 and Section~A.3]{TMLEbook}:
\begin{lemma}
  \label{lem:pathwise:bin}
  Fix $P  \in \xM$ and introduce  the influence curve  $D^{b}(P) \in L_{0}^{2}
  (P)$ given by $D^{b}(P) \equiv D_{1}^{b} (P) + D_{2}^{b} (P)$ with
  \begin{eqnarray*}
    D_{1}^{b}(P) (O) & \equiv & Q_{P}(1,W) - Q_{P}(0,W) - \Psi^{b}(P),\\
    D_{2}^{b}(P) (O) & \equiv & (Y - Q_{P}(A,W)) \frac{2A-1}{g_{P}(A|W)}.
  \end{eqnarray*}
  For  every  uniformly  bounded  $s  \in  L_{0}^{2} (P)$  and  every  $t  \in
  ]-\|s\|_{\infty}^{-1},  \|s\|_{\infty}^{-1}[$, define  $P_{s,t} \in  \xM$ by
  setting
  \begin{equation*}
    \frac{dP_{s,t}}{dP} = 1 + t s.
  \end{equation*}
  It holds  that $t \mapsto \Psi^{b}  (P_{s,t})$ is differentiable at  0 (as a
  function  from  $\xR$  to  $\xR$)  with  a  derivative  at  0  equal  to  $P
  D^{b}(P)s$. 

  The asymptotic  variance of any  regular estimator of $\Psi^{b}  (P_{0})$ is
  larger than the Cram\'er-Rao lower-bound $P_{0} D^{b}(P_{0})^{2}$. Moreover,
  for any $P,P' \in \xM$,
  \begin{equation}
    \label{eq:double:rob:bin}
    P D^{b}(P') = \Psi^{b} (P) - \Psi^{b} (P') + P (2A-1) (Q_{P'} - Q_{P})
    \left(\frac{1}{g_{P}} - \frac{1}{g_{P'}} \right). 
  \end{equation}
  Consequently if  $P D^{b}  (P') =  0$, then $\Psi^{b}  (P') =  \Psi^{b} (P)$
  whenever $g_{P'} = g_{P}$ {\bf\em or} $Q_{P'} = Q_{P}$.
\end{lemma}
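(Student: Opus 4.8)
The plan is to treat $\Psi^{b}(P_{s,t})$ as a product of two $t$-dependent objects, the outcome regression $Q_{P_{s,t}}(a,\cdot)$ and the marginal $P_{s,t,W}$, and to differentiate it by the product rule at $t=0$. Writing $\Psi^{b}(P_{s,t}) = \int_{\xW}(Q_{P_{s,t}}(1,w) - Q_{P_{s,t}}(0,w))\,dP_{s,t,W}(w)$ and using $dP_{s,t}/dP = 1 + ts$, I would first note that the marginal density of $W$ along the path is $1 + t\,\espi{P}{s\mid W=w}$, so differentiating the marginal factor alone (holding $Q$ fixed) produces $\espi{P}{(Q_{P}(1,W) - Q_{P}(0,W))\,\espi{P}{s \mid W}}$, which by the tower property and $Ps = 0$ equals $P D_{1}^{b}(P)s$. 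Next I would differentiate the regression factor: writing $Q_{P_{s,t}}(a,w)$ as the ratio $(Q_{P}(a,w) + t\,\espi{P}{Ys\mid A=a,W=w})/(1 + t\,\espi{P}{s\mid A=a,W=w})$ and differentiating at $t=0$ gives $\espi{P}{(Y - Q_{P}(a,w))s \mid A=a, W=w}$. Integrating the difference of these over $dP_{W}$ and re-expressing the conditional expectations through the propensity $g_{P}$ (reweighting $a=0,1$ by $g_{P}(a\mid W)$, which cancels the $g_{P}(a\mid W)^{-1}$) recovers exactly $P D_{2}^{b}(P)s$. Summing the two contributions yields the claimed derivative $P D^{b}(P)s$.

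I expect the delicate point to be the differentiation of the conditional mean $Q_{P_{s,t}}$, both the ratio computation and the justification that one may interchange differentiation and integration; here the hypotheses $Y \in [0,1]$ and $s$ bounded make every integrand dominated, so dominated convergence applies and the $o(t)$ remainder is controlled. Once the gradient is identified, the efficiency statement is immediate from standard semiparametric theory: since the path above can be built for every bounded $s \in L_{0}^{2}(P)$, the tangent space is the whole maximal space $L_{0}^{2}(P)$, $D^{b}(P)$ is a gradient lying in it and is therefore the canonical gradient, so the convolution theorem (\emph{e.g.}\ \citep[][Theorem~25.20]{vdV98}) gives that any regular estimator has asymptotic variance at least $P_{0}D^{b}(P_{0})^{2}$.

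For the double robustness identity I would simply compute $P D^{b}(P') = P D_{1}^{b}(P') + P D_{2}^{b}(P')$ directly. The first piece gives $\espi{P}{Q_{P'}(1,W) - Q_{P'}(0,W)} - \Psi^{b}(P')$; for the second, conditioning on $(A,W)$ replaces $Y$ by $Q_{P}(A,W)$ and then conditioning on $W$ and summing over $a \in \{0,1\}$ against $g_{P}(a\mid W)$ produces $\espi{P}{\sum_{a}(2a-1)\,\tfrac{g_{P}(a\mid W)}{g_{P'}(a\mid W)}(Q_{P}(a,W) - Q_{P'}(a,W))}$. Collecting terms and using $\Psi^{b}(P) = \espi{P}{Q_{P}(1,W) - Q_{P}(0,W)}$ rearranges into the right-hand side of (\ref{eq:double:rob:bin}); this is routine algebra with no analytic subtlety. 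The final consequence is then read off: if $P D^{b}(P') = 0$ the identity forces $\Psi^{b}(P') - \Psi^{b}(P)$ to equal the product term $P(2A-1)(Q_{P'} - Q_{P})(1/g_{P} - 1/g_{P'})$, which vanishes as soon as $g_{P'} = g_{P}$ (killing the second factor) or $Q_{P'} = Q_{P}$ (killing the first), giving $\Psi^{b}(P') = \Psi^{b}(P)$ in either case.
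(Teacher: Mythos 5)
Your proposal is correct, but note that there is no internal proof to compare it against: the paper states Lemma~\ref{lem:pathwise:bin} without proof, deferring entirely to \citep[Chapter~5 and Section~A.3]{TMLEbook}. Your argument is the standard one found in that literature, and your execution checks out. The ratio representation $Q_{P_{s,t}}(a,w) = \bigl(Q_{P}(a,w) + t\,\espi{P}{Ys \mid A=a, W=w}\bigr)/\bigl(1 + t\,\espi{P}{s \mid A=a, W=w}\bigr)$ is exact and smooth on $|t| < \|s\|_{\infty}^{-1}$ (the denominator stays bounded away from $0$), so with $Y \in [0,1]$ and $s$ bounded the interchange of differentiation and integration is indeed unproblematic; the tower-property step for the marginal factor correctly uses $Ps = 0$ to absorb the centering term $-\Psi^{b}(P)$, and the reweighting of the conditional expectations by $g_{P}(a\mid W)$, cancelling the $g_{P}(A\mid W)^{-1}$ in $D_{2}^{b}(P)$, is exactly right. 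For the efficiency claim, your observation that bounded centered $s$ are dense in $L_{0}^{2}(P_{0})$, so the closed tangent space is maximal and the gradient $D^{b}(P_{0})$ --- itself an element of $L_{0}^{2}(P_{0})$, since both $D_{1}^{b}$ and $D_{2}^{b}$ are centered --- is the canonical one, is the correct route to the convolution-theorem bound. I re-derived the double-robustness identity by your method (condition on $(A,W)$ to replace $Y$ by $Q_{P}(A,W)$, then on $W$ against $g_{P}(a\mid W)$) and it reproduces \eqref{eq:double:rob:bin} exactly, after which the final ``consequently'' statement is immediate as you say.

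One bookkeeping caveat worth a sentence in your write-up: you implicitly work with probability measures (conditional expectations, tower property), whereas the paper's $\xM$ consists of finite measures, with $Q_{P}$, $g_{P}$ and $P_{W}$ defined through conventions scaled by $P(\xO)$. Since $Ps = 0$ forces $P_{s,t}(\xO) = P(\xO)$ along the path, and since every invocation of \eqref{eq:double:rob:bin} in the paper takes $P = P_{0}$ (a genuine probability) in the first slot --- the identity is purely algebraic in $Q_{P'}$, $g_{P'}$, $\Psi^{b}(P')$ and so holds for arbitrary $P' \in \xM$ --- your restriction to $P(\xO)=1$ covers all the uses made of the lemma, but you should state it.
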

The last statement is called a ``double-robustness property''.  Let $\calR^{b}
: \xM^{2} \to \xR$ be given by
\begin{equation}
  \label{eq:second:order:bin}
  \calR^{b} (P, P') \equiv \Psi^{b} (P') - \Psi^{b} (P) - (P' - P)D^{b} (P), 
\end{equation}
as in (\ref{eq:second:order}). In particular,
\begin{eqnarray*}
  \calR^{b} (P,P_{s,t}) &  = & \Psi^{b} (P_{s,t}) - \Psi^{b}  (P) - (P_{s,t} -
  P) D^{b} (P) \\
  & = & \Psi^{b}  (P_{s,t}) - \Psi^{b} (P) - tP D^{b}  (P)s = o(t),
\end{eqnarray*}
showing that (\ref{eq:linear:exp}) is met.

Furthermore, (\ref{eq:double:rob:bin}) and $P D^{b} (P) = 0$ imply
\begin{equation*}
  \calR^{b} (P, P') = P' (2A-1) (Q_{P'} - Q_{P})
  \left(\frac{1}{g_{P'}} - \frac{1}{g_{P}} \right). 
\end{equation*}
In the context of this example,  {\bf A4} is fulfilled with $\gamma_{n} \equiv
0$ (hence $\Gamma_{n} \equiv 0$ and $\gamma_{1}=0$) when
\begin{equation}
\label{eq:verifRasympt}
  \calR^{b}  (P_{n}^{*}, P_{0})  =  P_{0} (2A-1)  (Q_{P_{n}^{*}} -  Q_{P_{0}})
  \left( \frac{1}{g_{P_{n}^{*}}} - \frac{1}{g_{P_{0}}}\right) = o_{P}
  (1/\sqrt{n}). 
\end{equation}
Through the product, we will  draw advantage of the synergistic convergences of
$Q_{P_{n}^{*}}$  to $Q_{P_{0}}$  and  $g_{P_{n}^{*}}$ to  $g_{P_{0}}$ (by  the
Cauchy-Schwarz inequality  for example).  Note  that if $g_{P_{0}}$  is known,
then we can impose that $g_{P_{n}^{*}} = g_{P_{0}}$ and $\calR^{b} (P_{n}^{*},
P_{0}) = 0$ exactly. 

\subsubsection*{Construction of the targeted estimator.}


Let $\xQ^{w}$ and $\xG^{w}$ be  two user-supplied classes of functions mapping
$\xA  \times \xW$  to $[0,1]$.   We impose  that the  elements of  $\xQ^w$ are
uniformly bounded away  from 0 and 1.  Similarly, we  impose that the elements
of $\xG^w$ are uniformly bounded away from 0.  Let $\ell$ be the logistic loss
function given by
\begin{equation*}
  -\ell(u,v) \equiv u \log (v) + (1-u) \log (1-v)
\end{equation*}
(all $u,v \in [0,1]$ with conventions $\log(0)=-\infty$ and $0 \log(0)=0$).

We first  estimate $Q_{P_0}$  and $g_{P_{0}}$ with  $Q_{n}$ and  $g_{n}$ built
upon $\PHT$, $\xQ^{w}$ and $\xG^{w}$.  For instance, one could simply minimize
(weighted) empirical risks and define
\begin{eqnarray*}
  Q_{n} & \equiv & \mathop{\argmin}_{Q \in \xQ^{w}} \PHT \ell(Y, Q(A,W))
  =   \mathop{\argmin}_{Q \in  \xQ^{w}}  \sum_{i=1}^{N} \frac{\eta_{i}}{p_{i}}
  \ell(Y_{i}, Q(A_{i},W_{i})),\\
  g_{n} & \equiv & \mathop{\argmin}_{g \in \xG^{w}} \PHT \ell(A, g(A|W))
  =  \mathop{\argmin}_{g \in  \xG^{w}}  \sum_{i=1}^{N} \frac{\eta_{i}}{p_{i}}
  \ell(A_{i}, g(A_{i}|W_{i}))
\end{eqnarray*}
(assuming  that  the  $\argmin$s  exist).   Alternatively,  one  could  prefer
minimizing  cross-validated (weighted)  empirical  risks. This  is beyond  the
scope of  this article but will be  studied in future work.   We also estimate
the marginal distribution $P_{0,W}$ of $W$ under $P_{0}$ with
\begin{equation}
  \label{eq:PHT:W}
  \PHTW \equiv \frac{1}{N}\sum_{i=1}^N \frac{\eta_i}{p_i}\Dirac(W_i).
\end{equation}
Let  $P_{n}^{0}$  be   a  measure  such  that  $Q_{P_{n}^{0}}   =  Q_{n}$  and
$P_{n,W}^{0} = \PHTW$. Then
\begin{equation}
  \label{eq:Psin0bis}
  \Psi^{b}    (P_{n}^{0})    =   \frac{1}{N}\sum_{i=1}^N    \frac{\eta_i}{p_i}
  \left(Q_{n} (1, W_{i}) - Q_{n} (0, W_{i})\right)
\end{equation}
is an estimator of $\psi_{0}^{b}$, whose construction is not tailored/targeted
to $\psi_{0}^{b}$.  It is now time to target the inference procedure.

Targeting the inference procedure consists  in modifying $P_{n}^{0}$ in such a
way that the resulting  $P_{n}^{*}$ satisfies (\ref{eq:targeted}) with $D^{b}$
substituted for $D$. We first note that, by construction of $P_{n}^{0}$, 
\begin{equation*}
  \PHT D_{1}^{b} (P_{n}^{0}) = \PHTW D_{1}^{b} (P_{n}^{0}) = 0. 
\end{equation*}
This equality is equivalent to (\ref{eq:Psin0bis}).  

The construction of $P_{n}^{*}$ based on $P_{n}^{0}$ reduces to ensuring $\PHT
D_{2}^{b}  (P_{n}^{*}) = o_{P}  (1/\sqrt{n})$.  We  achieve this  objective by
fluctuating the conditional  measure of $Y$ given $(A,W)$  only.  For this, we
introduce the  one-dimensional parametric  model $\{Q_{n} (t)  : t  \in \xR\}$
given by
\begin{equation*}
  \logit Q_{n} (t) (A,W) = \logit Q_{n} (A,W) + t \frac{2A-1}{g_{n} (A|W)}.  
\end{equation*}
This   parametric    model   fluctuates   $Q_{n}$   in    the   direction   of
$\frac{2A-1}{g_{n} (A|W)}$ in the sense that $Q_{n} (0) = Q_{n}$ and 
\begin{equation}
  \label{eq:score:bin}
  \frac{d}{dt}   \ell(Y,   Q_{n}   (t)    (A,W))   =   (Y   -   Q_{n}(t)(A,W))
  \frac{2A-1}{g_{n} (A|W)} 
\end{equation}
for all $t \in \xR$.  The optimal move along the fluctuation is indexed by
\begin{equation}
\label{eq:optShift}
  t_{n} \equiv \mathop{\arg\min}_{t \in \xR} \PHT \ell(Y, Q_{n} (t) (A,W))
\end{equation}
(note that  the random function $t  \mapsto \PHT \ell(Y, Q_{n}  (t) (A,W))$ is
strictly convex). 

Define $Q_{n}^{*} \equiv Q_{n} (t_{n})$ and let $P_{n}^{*}$ be any element $P$
of  $\xM$  such  that $Q_{P}  =  Q_{n}^{*}$,  $g_{P}  =  g_{n}$ and  $P_{W}  =
P_{n,W}^{0} = \PHTW$.  Our final estimator is 
\begin{equation*}
  \psi_{n}^{*} \equiv \Psi^{b} (P_{n}^{*}) = \frac{1}{N}\sum_{i=1}^N 
  \frac{\eta_i}{p_i}    \left(Q_{n}^{*}   (1,    W_{i})   -    Q_{n}^{*}   (0,
    W_{i})\right).
\end{equation*}
By  definition of $t_{n}$  and (\ref{eq:score:bin}),  we have  $\PHT D_{1}^{b}
(P_{n}^{*}) = 0$ (just like $\PHT D_{1}^{b} (P_{n}^{0}) = 0$) and
\begin{equation*}
  \PHT  \left.\frac{d}{dt} \ell(Y,  Q_{n}(t) (A,W))\right|_{t=t_{n}}
  \PHT D_{2}^{b} (P_{n}^{*}) = 0
\end{equation*}
(whereas it  is very  unlikely that $\PHT  D_{2}^{b} (P_{n}^{0})$ be  equal to
zero).  Consequently, (\ref{eq:targeted}) is met because
\begin{equation*}
  \PHT D^{b} (P_{n}^{*}) = 0.
\end{equation*}

Theorem~\ref{thm:main}    is   tailored    to   the    present    setting   in
Section~\ref{subsec:tailoring}.

\subsection{Variable importance measure of a continuous exposure}
\label{subsec:continuous}

In this section, $\xA \subset \xR$  is a bounded subset of $\xR$ containing 0,
which serves  as a  reference value.  Moreover,  we assume that  $P_{0,A|W} (A
\neq 0|W)  > 0$  $P_{0,W}$-almost surely  and the existence  of a  constant $c
(P_{0})>0$  such  that  $P_{0,A|W}  (A=0|W)  \geq  c(P_{0})$  $P_{0,W}$-almost
surely.  Introduced  in~\citep{NPVI,NPVI.app}, the true  parameter of interest
is
\begin{eqnarray}
  \notag
  \psi_{0}^{c}    &    \equiv     &    \mathop{\arg\min}_{\beta    \in    \xR}
  \espi{P_{0}}{\left(Y - \espi{P_{0}}{Y|A=0, W} - \beta A\right)^{2}} \\
  \label{eq:psi0:cont}
  & = & \mathop{\arg\min}_{\beta    \in    \xR}
  \espi{P_{0}}{\left(\espi{P_{0}}{Y|A,W}  -  \espi{P_{0}}{Y|A=0,  W}  -  \beta
      A\right)^{2}}
\end{eqnarray}
(the superscript ``$c$'' stands for ``continuous'').  

Let $\xM$  be the set  of finite  measures $P$ on  $\xO \equiv \xW  \times \xA
\times [0,1]$ equipped with the  Borel $\sigma$-field such that there exists a
constant $c(P)>0$  guaranteeing that  the marginal measure  of $\{w \in  \xW :
P_{A|W=w} (\xA \setminus \{0\}) > 0 \text{ and } P_{A|W=w}(\{0\}) \geq c(P)\}$
under $P_{W}$  equals $P(\xO)$.  In particular,  $P_{0} \in \xM$  by the above
assumption.

We see  $\psi_{0}^{c}$ as  the value at  $P_{0}$ of the  functional $\Psi^{c}$
characterized over $\xM$ by
\begin{equation}
  \label{eq:Psi:cont}
  \Psi^{c}      (P)     \equiv      \mathop{\arg\min}_{\beta      \in     \xR}
  \int_{\xA \times \xW} \left(Q_{P} (a,w) - Q_{P} (0,w) - \beta
    a\right)^{2} dP_{A|W=w}(a) dP_{W}(w),
\end{equation}
using  the  notation  of  Section~\ref{subsec:binary}.   By  Proposition~1  in
\citep{NPVI}, for each $P \in \xM$, 
\begin{equation*}
  \Psi^{c}   (P)    =   \frac{\int_{\xA   \times   \xW}    a(Q_{P}   (a,w)   -
    Q_{P}(0,w))dP_{A|W=w}(a)    dP_{W}(w)}{\int_{\xA    \times   \xW}    a^{2}
    dP_{A|W=w}(a) dP_{W}(w)}. 
\end{equation*}
If $P$ is a {\em distribution}, then
\begin{equation*}
  \Psi^{c}       (P)        =       \frac{\espi{P}{A(Q_{P}       (A,W)       -
      Q_{P}(0,W))}}{\espi{P}{A^{2}}}.
\end{equation*}

For  clarity,  we  introduce  some  notation.    For  each  $P  \in  \xM$  and
$(w,a) \in \xW \times \xA$, $\mu_{P}  (w) \equiv \int_{\xA} a dP_{A|W=w} (a)$,
and        $g_{P}        (0|w)        \equiv        P_{A|W=w}        (\{0\})$,
$\zeta^{2} (P) \equiv \int_{\xA} a^{2} dP_{A|W=w}  (a)$.  If $P (\xO) =1$, then
$\mu_{P}    (W)   =    \espi{P}{A|W}$,    $g_{P}(0|W)    =   P(A=0|W)$,    and
$\zeta^{2} (P) = \espi{P}{A^{2}}$.

\subsubsection*{Pathwise differentiability.}

A                     result                    similar                     to
Lemma~\ref{lem:pathwise:bin}~\cite[see][Proposition~1]{NPVI}  guarantees  that
$\Psi^{c}$ is  pathwise differentiable  like $\Psi^{b}$ with  influence curves
$D^{c}(P)\equiv D_{1}^{c} (P) + D_{2}^{c} (P) \in L_{0}^{2} (P)$,
\begin{eqnarray*}
  \zeta^{2} (P) D_{1}^{c}(P) (O) & \equiv & A\left(Q_{P}(A,W) - Q_{P}(0,W)
                                            - A\Psi^{c}(P)\right),\\ 
  \zeta^{2} (P) D_{2}^{c}(P) (O) & \equiv & (Y - Q_{P}(A,W)) \left(A -
                                            \frac{\mu_{P}(W) \1\{A=0\}}{g_{P}(0|W)}\right)
\end{eqnarray*}
(all $P \in \xM$).  Let $\calR^{c} : \xM^{2} \to \xR$ be characterized by
\begin{equation*}
  \calR^{c} (P, P') \equiv \Psi^{c} (P') - \Psi^{c} (P) - (P' - P)D^{c} (P).
\end{equation*}
as  in  (\ref{eq:second:order})  and  (\ref{eq:second:order:bin}). As  in  the
previous example,  $\calR^{c}$ satisfies (\ref{eq:linear:exp})  and, for every
$P,P' \in \xM$,
\begin{multline}
  \label{eq:eq:double:rob:cont:2}
  \calR^{c} (P,  P') =  \left(1 - \frac{\zeta^{2}  (P')}{\zeta^{2} (P)}\right)
  \left(\Psi^{c}(P')  -   \Psi^{c}(P)\right)  \\+   \frac{1}{\zeta^{2}(P)}  P'
  \left((Q_{P'}(0,\cdot)   -   Q_{P}(0,\cdot))    \left(\mu_{P'}   -   \mu_{P}
      \frac{g_{P'}(0|\cdot)}{g_{P}(0|\cdot)} \right)\right).
\end{multline}

Introduce 
\begin{equation*}
  \gamma_{n} \equiv  1 - \frac{\zeta^{2}(P_{0})}{\zeta^{2}(P_{n}^{*})} \quad
  \text{and}         \quad        \Gamma_{n}         \equiv         1        -
  \frac{\zeta_{n}^{2}(P_{0})}{\zeta_{n}^{2}(P_{n}^{*})} 
\end{equation*}
where    $\zeta_{n}^{2}(P_{0})$   and    $\zeta_{n}^{2}(P_{n}^{*})$   estimate
$\zeta^{2}(P_{0})$   and   $\zeta^{2}(P_{n}^{*})$.     With   these   choices,
(\ref{eq:eq:double:rob:cont:2}) guarantees  that {\bf A4} is  fulfilled in the
context of  this example when $\zeta^{2}(P_{n}^{*})$  converges in probability
to a finite real number such that $\gamma_{1} \neq 1$ and
\begin{equation*}
  \frac{1}{\zeta^{2}   (P_{n}^{*})}   P_{0}   \left((Q_{P_{0}}  (0,\cdot)   -
    Q_{P_{n}^{*}}     (0,\cdot))    \left(\mu_{P_{0}}     -    \mu_{P_{n}^{*}}
      \frac{g_{P_{0}}(0|\cdot)}{g_{P_{n}^{*}}(0|\cdot)} \right)\right) = o_{P}
  (1/\sqrt{n}).  
\end{equation*}
Through the product, we will  draw advantage of the synergistic convergences of
$Q_{P_{n}^{*}}  (0,\cdot)$  to  $Q_{P_{0}} (0,\cdot)$  and  $(\mu_{P_{n}^{*}},
g_{P_{n}^{*}})$   to  $(\mu_{P_{0}},   g_{P_{0}})$   (by  the   Cauchy-Schwarz
inequality  for example).   

\subsubsection*{Construction of the targeted estimator.}

Let  $\xQ^{w}$, $\xMu^{w}$  and $\xG^{w}$  be three  user-supplied  classes of
functions mapping $\xA \times \xW$,  $\xW$ and $\xW$ to $[0,1]$, respectively.
We first  estimate $Q_{P_0}$, $\mu_{P_{0}}$  and $g_{P_{0}}$ with  $Q_{n}$ and
$\mu_{n}$ and $g_{n}$ built  upon $\PHT$, $\xQ^{w}$, $\xMu^{w}$ and $\xG^{w}$.
For instance, one could simply minimize (weighted) empirical risks and define
\begin{eqnarray*}
  Q_{n} & \equiv & \mathop{\argmin}_{Q \in \xQ^{w}} \PHT \ell(Y, Q(A,W))
  =   \mathop{\argmin}_{Q \in  \xQ^{w}}  \sum_{i=1}^{N} \frac{\eta_{i}}{p_{i}}
  \ell(Y_{i}, Q(A_{i},W_{i})),\\
  \mu_{n} & \equiv & \mathop{\argmin}_{\mu \in \xMu^{w}} \PHT \ell(A, \mu(W))
  =  \mathop{\argmin}_{\mu \in  \xMu^{w}}  \sum_{i=1}^{N} \frac{\eta_{i}}{p_{i}}
  \ell(A_{i}, \mu(W_{i})), \\
  g_{n} & \equiv & \mathop{\argmin}_{g \in \xG^{w}} \PHT \ell(\1\{A=0\}, g(0|W))
  =  \mathop{\argmin}_{g \in  \xG^{w}}  \sum_{i=1}^{N} \frac{\eta_{i}}{p_{i}}
  \ell(\1\{A_{i}=0\}, g(0|W_{i}))
\end{eqnarray*}
(assuming  that  the  $\argmin$s  exist).   Alternatively,  one  could  prefer
minimizing cross-validated  (weighted) empirical risks.  We  also estimate the
marginal distribution $P_{0,W}$ of $W$ under $P_{0}$ with
\begin{equation}
  \label{eq:PHT:W1}
  \PHTW \equiv \frac{1}{N}\sum_{i=1}^N \frac{\eta_i}{p_i}\Dirac(W_i),
\end{equation}
and  the real-valued  parameter $\zeta^{2}  (P_{0})$ with  $\zeta^{2} (\PHTX)$
where  $\PHTX$  is  defined  as  in (\ref{eq:PHT:W1})  with  $X$  and  $X_{i}$
substituted for $W$ and $W_{i}$.

Let   $P_{n}^{0}$  be   a   measure  such   that   $Q_{P_{n}^{0}}  =   Q_{n}$,
$\mu_{P_{n}^{0}}      =      \mu_{n}$,      $g_{P_{n}^{0}}      =      g_{n}$,
$\zeta^{2} (P_{n}^{0}) =  \zeta^{2} (\PHTX)$, $P_{n,W}^{0} =  \PHTW$, {\em and
  from  which we  can sample  $A$ conditionally  on $W$}.   Picking up  such a
$P_{n}^{0}$  is  an easy  technical  task,  see~\citep[][Lemma~5]{NPVI} for  a
computationally    efficient    choice.     Then   the    initial    estimator
$\Psi^{b} (P_{n}^{0})$ of $\psi_{0}^{b}$ can be computed with high accuracy by
Monte-Carlo.  It  suffices to sample  a large  number $B$ (say  $B=10^{7}$) of
independent  $(A^{(b)},  W^{(b)})$  by   {\em  (i)}  sampling  $W^{(b)}$  from
$P_{n,W}^{0} = \PHTW$ then {\em  (ii)} sampling $A^{(b)}$ from the conditional
distribution  of  $A$  given  $W=W^{(b)}$  under  $P_{n}^{0}$  repeatedly  for
$b=1, \ldots, B$ and to make the approximation
\begin{equation}
  \label{eq:Psin0}
  \Psi^{c}   (P_{n}^{0})  \approx  \frac{B^{-1}   \sum_{b=1}^{B}  A^{(b)}
    (Q_{n}(A^{(b)}, W^{(b)}) - Q_{n}(0, W^{(b)}))}{\zeta^{2} (P_{n}^{0})}.
\end{equation}
However, the  construction $\Psi^{c} (P_{n}^{0})$ is  not tailored/targeted to
$\psi_{0}^{c}$ yet.  It is now time to target the inference procedure.

Targeting the inference procedure consists  in modifying $P_{n}^{0}$ in such a
way that the resulting  $P_{n}^{*}$ satisfies (\ref{eq:targeted}) with $D^{c}$
substituted for $D$. We proceed iteratively. Suppose that $P_{n}^{k}$ has been
constructed  for  some  $k  \geq   0$.   We  fluctuate  $P_{n}^{k}$  with  the
one-dimensional  parametric model  $\{P_{n}^{k} (t)  : t  \in \xR,  t^{2} \leq
c(P_{n}^{k})/\|D^{c} (P_{n}^{k})\|_{\infty}\}$ characterized by
\begin{equation*}
  \frac{dP_{n}^{k} (t)}{dP_{n}^{k}} = 1 + t D^{c} (P_{n}^{k}). 
\end{equation*}
Lemma~1  in~\citep{NPVI} shows  how $Q_{P_{n}^{k}(t)}$,  $\mu_{P_{n}^{k}(t)}$,
$g_{P_{n}^{k}(t)}$,  $\zeta^{2}  (P_{n}^{k}(t))$ and  $P_{n,W}^{k}(t)$  depart
from their  counterparts at $t=0$. The  optimal move along the  fluctuation is
indexed by
\begin{equation*}
  t_{n}^{k}   \equiv  \mathop{\arg\max}_{t}   \PHT  \log\left(1   +   t  D^{c}
    (P_{n}^{k})   \right),
\end{equation*}
{\em i.e.}, the maximum likelihood estimator of $t$ (note that the
random function $t  \mapsto \PHT \log(1 + t  D^{c} (P_{n}^{k}))$ is
strictly  concave).  It  results  in  the $(k+1)$-th  update  of  $P_{n}^{0}$,
$P_{n}^{k+1} \equiv P_{n}^{k} (t_{n}^{k})$.  

Contrary    to     what    happened     in    the    first     example,    see
Section~\ref{subsec:binary}, there  is no guarantee that  a $P_{n}^{k+1}$ will
coincide  with  its predecessor  $P_{n}^{k}$.   In  this  light, the  updating
procedure in  Section~\ref{subsec:binary} converged in one  single step. Here,
we  assume that the  iterative updating  procedure converges  (in $k$)  in the
sense  that, for  $k_{n}$  large enough,  $\PHT  D^{c}(P_{n}^{k_{n}}) =  o_{P}
(1/\sqrt{n})$.  We  set  $P_{n}^{*}  \equiv  P_{n}^{k_{n}}$.  It  is  actually
possible  to  come up  with  a one-step  updating  procedure  ({\em i.e.},  an
updating procedure such  that $P_{n}^{k} = P_{n}^{k+1}$ for  all $k\geq 1$) in
this  example  too  by   relying  on  so-called  universally  least  favorable
models~\citep{ULFM16}.   We  adopt  this  multi-step  updating  procedure  for
simplicity.

We can assume without loss of  generality that we can sample $A$ conditionally
on $W$  from $P_{n}^{*}$. The final  estimator is computed  with high accuracy
like $\Psi^{c} (P_{n}^{0})$ previously: with $Q_{n}^{*} \equiv Q_{P_{n}^{*}}$,
we sample $B$ independent $(A^{(b)}, W^{(b)})$ by {\em (i)} sampling $W^{(b)}$
from  $P_{n,W}^{*}$ then {\em  (ii)} sampling  $A^{(b)}$ from  the conditional
distribution of  $A$ given $W=W^{(b)}$ under $P_{n}^{*}$  repeatedly for $b=1,
\ldots, B$ and make the approximation
\begin{equation}
  \label{eq:Psin*}
  \psi_{n}^{*} \equiv \Psi^{c} (P_{n}^{*}) \approx \frac{B^{-1} \sum_{b=1}^{B}
    A^{(b)} (Q_{n}^{*}(A^{(b)},  W^{(b)}) - Q_{n}^{*}(0, W^{(b)}))}{\zeta^{2}
    (P_{n}^{*})}. 
\end{equation}

Theorem~\ref{thm:main}    is   tailored    to   the    present    setting   in
Section~\ref{subsec:tailoring}.

\subsection{Tailoring    the    main    theorem    in   the    settings    of
  Sections~\ref{subsec:binary} and \ref{subsec:continuous}}
\label{subsec:tailoring}

Consider  the following  assumptions for  the study  of $\psi_{n}^{*}$  in the
setting of Section~\ref{subsec:binary}:
\begin{description}
\item[A1$^{b}$] The classes $\xQ^w$  and $\xG^w$ are separable, $P_0 Q^2h^{-1}
  < \infty$  and $P_0 g^2h^{-1} < \infty$  for all $(Q, g)  \in \xQ^{w} \times
  \xG^{w}$, and  $J(1, \xQ^{w}, \|\cdot\|_{2,P_{0}})<  \infty$, $J(1, \xG^{w},
  \|\cdot\|_{2,P_{0}}) <  \infty$. Moreover, {\bf A2$^*$} is  met by $\xQ^{w}$
  and $\xG^{w}$.
\item[A2$^{b}$]    There     exists    $P_{1}     \in    \xM$     such    that
  $\|D^{b}(P_{n}^{*})  -  D^{b}  (P_{1})\|_{2,P_{0}} =  o_{P}(1)$.   Moreover,
  $\|Q_{n}^{*} - Q_{P_{0}}\|_{2,P_{0}}\times \|g_{n} - g_{P_{0}}\|_{2,P_{0}} =
  o_{P} (1/\sqrt{n})$ and  one knows a conservative  estimator $\Sigma_{n}$ of
  $P_{0} D^{b} (P_{1})^{2} h^{-1}$.
\end{description}

The assumptions  required for  the study of  $\psi_{n}^{*}$ in the  setting of
Section~\ref{subsec:continuous} are very similar:
\begin{description}
\item[A1$^c$] There exists a set $\xF \subset  \{D(P) : P \in \xM\}$ such that
  {\bf A1} and {\bf A2} are verified.
\item[A2$^{c}$]  There exist  $\zeta_{-}^{2} >  0$  and $P_{1}  \in \xM$  with
  $\zeta^{2}(P_1) \geq \zeta_{-}^{2} > 0$ such that
  \begin{gather*}
    \zeta^{2}(P_{n}^{*}) = \zeta^{2}(P_1) + O_{P} (1/\sqrt{n}),\\
    \|D^{c}(P_{n}^{*}) - D^{c} (P_{1})\|_{2,P_{0}} =o_{P}(1),\\
    \|Q_{n}^{*}   -   Q_{P_{0}}\|_{2,P_{0}}   \times   \left(\|\mu_{n}^{*}   -
      \mu_{P_{0}}\|_{2,P_{0}}  +  \|g_{n}  - g_{P_{0}}\|_{2,P_{0}}  \right)  =
    o_{P} (1/\sqrt{n}).
  \end{gather*}
  Moreover, $\Gamma_{n} - \gamma_{n} =  o_{P}(1)$ and one knows a conservative
  estimator $\Sigma_{n}$ of $P_{0} D^{b} (P_{1})^{2} h^{-1}$.
\end{description}

In {\bf A2$^{b}$}, $Q_{P_1}$ and $g_{P_1}$  should be interpreted as the limits of
$Q_{P_{n}^{*}}$ and $g_{P_{n}^{*}}$.  Likewise, $Q_{P_1}$, $\mu_{P_1}$ and $g_{P_1}$
in  {\bf A2$^{c}$}  should be  interpreted as  the limits  of $Q_{P_{n}^{*}}$,
$\mu_{P_{n}^{*}}$ and $g_{P_{n}^{*}}$.

\begin{corollary}
  \label{co:resultexamples}
  Set $\alpha  \in (0,1)$.  In the setting  of Section~\ref{subsec:binary} and
  under {\bf A1$^{b}$}, {\bf A2$^{b}$},
  \begin{equation*}
    \left[\psi_{n}^{*}           \pm                    
      \frac{\xi_{1-\alpha/2} \sqrt{\Sigma_{n}}}{\sqrt{n}} \right] 
  \end{equation*}
  is a confidence interval for $\psi_{0}^{b}$ with asymptotic coverage no less
  than  $(1-\alpha)$.  In the  setting of  Section~\ref{subsec:continuous} and
  under {\bf A1$^{c}$}, {\bf A2$^{c}$},
  \begin{equation*}
    \left[\psi_{n}^{*}           \pm                    
      \frac{\xi_{1-\alpha/2} \sqrt{\Sigma_{n}}}{(1-\Gamma_{n})\sqrt{n}} \right] 
  \end{equation*}
  is a confidence interval for $\psi_{0}^{c}$ with asymptotic coverage no less
  than $(1-\alpha)$.
\end{corollary}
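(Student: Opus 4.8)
The plan is to derive both statements from Theorem~\ref{thm:main} by checking, separately in each example, that the tailored hypotheses imply the generic assumptions {\bf A1}--{\bf A4}, and then reading off the announced intervals from the conclusion of the theorem. In the binary case I will take $\gamma_{n} \equiv 0$, so that $\Gamma_{n} \equiv 0$ and the factor $(1-\Gamma_{n})$ is absent; in the continuous case I will use the nontrivial $\gamma_{n}$ and $\Gamma_{n}$ introduced just before {\bf A4}.

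For the binary setting, I would build the class $\xF$ required by {\bf A1} out of the influence curves $D^{b}(P) = D_{1}^{b}(P) + D_{2}^{b}(P)$ as $Q_{P}$ ranges over a finite-entropy enlargement of $\xQ^{w}$ (containing the logistic fluctuations $Q_{n}(t)$) and $g_{P}$ ranges over $\xG^{w}$. Because $D^{b}$ is a fixed Lipschitz transformation of $(Q_{P}, g_{P})$ on the region where the elements of $\xQ^{w}$ stay in a compact subinterval of $(0,1)$ and those of $\xG^{w}$ are bounded away from $0$, separability, the integrability $P_{0} f^{2} h^{-1} < \infty$, and finiteness of $J(1, \xF, \|\cdot\|_{2,P_{0}})$ transfer from {\bf A1$^{b}$} to $\xF$, and the uniform entropy bound {\bf A2$^{*}$} likewise transfers; by the comment following Theorem~\ref{thm:main}, {\bf A2$^{*}$} stands in for {\bf A2}. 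Setting $f_{1} \equiv D^{b}(P_{1})$ with $P_{1}$ as in {\bf A2$^{b}$}, the convergence $\|D^{b}(P_{n}^{*}) - D^{b}(P_{1})\|_{2,P_{0}} = o_{P}(1)$ together with the supplied conservative estimator $\Sigma_{n}$ of $\sigma_{1}^{2} = P_{0} D^{b}(P_{1})^{2} h^{-1}$ yields {\bf A3}. For {\bf A4} with $\gamma_{n} \equiv 0$ I would start from~\eqref{eq:verifRasympt}, rewrite $1/g_{P_{n}^{*}} - 1/g_{P_{0}} = (g_{P_{0}} - g_{P_{n}^{*}})/(g_{P_{n}^{*}} g_{P_{0}})$, bound the denominator below via the lower bounds on $\xG^{w}$, and apply Cauchy--Schwarz to dominate $\calR^{b}(P_{n}^{*}, P_{0})$ by a constant times $\|Q_{P_{n}^{*}} - Q_{P_{0}}\|_{2,P_{0}} \times \|g_{P_{n}^{*}} - g_{P_{0}}\|_{2,P_{0}}$, which is $o_{P}(1/\sqrt{n})$ by {\bf A2$^{b}$}.

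For the continuous setting, {\bf A1$^{c}$} directly furnishes a class $\xF$ satisfying {\bf A1} and {\bf A2}, so no construction is needed. With $f_{1} \equiv D^{c}(P_{1})$, the convergence $\|D^{c}(P_{n}^{*}) - D^{c}(P_{1})\|_{2,P_{0}} = o_{P}(1)$ and the conservative estimator $\Sigma_{n}$ of $P_{0} D^{c}(P_{1})^{2} h^{-1}$ from {\bf A2$^{c}$} give {\bf A3}. For {\bf A4} I would specialize~\eqref{eq:eq:double:rob:cont:2} to $(P, P') = (P_{n}^{*}, P_{0})$: its first summand equals $-\gamma_{n}(\psi_{n}^{*} - \psi_{0})$, since $1 - \zeta^{2}(P_{0})/\zeta^{2}(P_{n}^{*}) = \gamma_{n}$ and $\Psi^{c}(P_{0}) - \Psi^{c}(P_{n}^{*}) = -(\psi_{n}^{*} - \psi_{0})$, so that $\gamma_{n}(\psi_{n}^{*} - \psi_{0}) + \calR^{c}(P_{n}^{*}, P_{0})$ collapses to the remaining product term. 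Bounding $1/\zeta^{2}(P_{n}^{*})$ through $\zeta^{2}(P_{1}) \geq \zeta_{-}^{2} > 0$ and $\zeta^{2}(P_{n}^{*}) = \zeta^{2}(P_{1}) + O_{P}(1/\sqrt{n})$, writing $\mu_{P_{0}} - \mu_{P_{n}^{*}} g_{P_{0}}/g_{P_{n}^{*}} = (\mu_{P_{0}} - \mu_{P_{n}^{*}}) + \mu_{P_{n}^{*}}(g_{P_{n}^{*}} - g_{P_{0}})/g_{P_{n}^{*}}$, and using that $\mu_{P_{n}^{*}}$ is bounded while $g_{P_{n}^{*}}(0|\cdot)$ is bounded below, Cauchy--Schwarz reduces this term to $\|Q_{P_{n}^{*}}(0,\cdot) - Q_{P_{0}}(0,\cdot)\|_{2,P_{0}} \times (\|\mu_{P_{n}^{*}} - \mu_{P_{0}}\|_{2,P_{0}} + C \|g_{P_{n}^{*}} - g_{P_{0}}\|_{2,P_{0}})$, which is $o_{P}(1/\sqrt{n})$ by {\bf A2$^{c}$}. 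The same $\zeta$-expansion shows $\gamma_{n} \to \gamma_{1} = 1 - \zeta^{2}(P_{0})/\zeta^{2}(P_{1}) \neq 1$ (as $\zeta^{2}(P_{0}) > 0$ under the standing assumptions), while $\Gamma_{n} - \gamma_{n} = o_{P}(1)$ is assumed in {\bf A2$^{c}$}; these complete {\bf A4}.

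Having verified {\bf A1}--{\bf A4} in each case, Theorem~\ref{thm:main} delivers the asymptotic normality of $(1-\gamma_{n})\sqrt{n}(\psi_{n}^{*} - \psi_{0})$ with variance $\sigma_{1}^{2}$ and hence the two stated intervals, the $(1-\Gamma_{n})$ factor dropping out in the binary case where $\gamma_{n} \equiv 0$. The main obstacle is the verification of {\bf A4}: in both examples one must recognize that the generic second-order remainder telescopes, through the double-robustness identities~\eqref{eq:verifRasympt} and~\eqref{eq:eq:double:rob:cont:2}, into a product of two $L^{2}(P_{0})$ differences whose size is controlled only by the rate \emph{product} supplied by {\bf A2$^{b}$}/{\bf A2$^{c}$}, never by either factor alone. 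A secondary technical point, specific to the binary case, is the transfer of the complexity conditions {\bf A1} and {\bf A2} from $\xQ^{w}, \xG^{w}$ to the nonlinearly derived class $\xF$.
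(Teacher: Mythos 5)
Your proposal is correct and takes essentially the same route as the paper: you rebuild the paper's class $\widetilde{\xQ}^{w}$ of logistic fluctuations (with $t_{n}$ confined to a compact set $\xT$) and transfer separability, integrability and entropy to $\xF$ via the same Lipschitz estimates, you verify {\bf A3} and {\bf A4} through the double-robustness identities \eqref{eq:verifRasympt} and \eqref{eq:eq:double:rob:cont:2} combined with Cauchy--Schwarz and the rate products of {\bf A2$^{b}$}/{\bf A2$^{c}$}, and you then invoke Theorem~\ref{thm:main} with $\gamma_{n}\equiv 0$ (hence $\Gamma_{n}\equiv 0$) in the binary case and the nontrivial $\gamma_{n},\Gamma_{n}$ in the continuous case. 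The only differences are presentational: you spell out the continuous-case verification that the paper compresses into ``the same kind of arguments'' (the heavy lifting there being already done in Section~\ref{subsec:continuous}, where {\bf A1$^{c}$} directly assumes {\bf A1}--{\bf A2}), and you correctly read the conservative estimator in {\bf A2$^{c}$} as targeting $P_{0}D^{c}(P_{1})^{2}h^{-1}$.
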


\section{Simulation study}
\label{sec:sim}

We illustrate  the methodology with  the inference of the  variable importance
measure of a continuous exposure presented in Section~\ref{subsec:continuous}.
We  consider  three  data-generating  distributions $P_{0,1}$,  $P_{0,2}$  and
$P_{0,3}$ of a  data-structure $O = (W,A,Y)$.  The  three distributions differ
only in  terms of  the conditional variance  of $Y$  given $(A,W)$, but  do so
drastically. Specifically,  $O = (W,A,Y)$ drawn from  $P_{0,j}$ ($j=1,2,3$) is
such that
\begin{itemize}
\item $W \equiv  (V, W_{1}, W_{2})$ with $P_{0} (V=1) =  1/6$, $P(V=2) = 1/3$,
  $P(V=3)=1/2$  and, conditionally  on  $V$, $(W_{1},  W_{2})$  is a  Gaussian
  random      vector       with      mean      $(0,0)$       and      variance
  $\begin{smatrix}[c]1&-0.2\\-0.2&1\end{smatrix}$  (if  $V=1$), $(1,1/2)$  and
  $\begin{smatrix}[c]0.5&0.1\\0.1&0.5\end{smatrix}$ (if $V=2$), $(1/2, 1)$ and
  $\begin{smatrix}1&0\\0&1\end{smatrix}$ (if $V=3$);
\item conditionally  on $W$, $A=0$ with  probability 80\% if  $W_{1} \geq 1.1$
  and $W_{2} \geq 0.8$ and  10\% otherwise; moreover, conditionally on $W$ and
  $A \neq 0$, $A-1$ is drawn from the $\chi^{2}$-distribution with 1 degree of
  freedom  and non-centrality parameter  $\sqrt{(W_{1} -  1.1)^{2} +  (W_{2} -
    0.8)^{2}}$;
\item conditionally  on $(W,A)$, $Y$ is  a Gaussian random  variable with mean
  $\espi{P_{0}} {Y|A,W}  \equiv A(W_1  + W_2)/6  + W_1 +  W_2/4 +  \exp((W_1 +
  W_2)/10)$ and standard deviation
  \begin{itemize}
  \item[-] 1.5 (if $V=1$), 1 (if $V=2$) and 0.5 (if $V=3$) for $j=1$;
  \item[-] 1 (if $V=1$), 5 (if $V=2$) and 10 (if $V=3$) for $j=2$;
  \item[-] 50 (if $V=1$), 10 (if $V=2$) and 1 (if $V=3$) for $j=3$.
  \end{itemize}
\end{itemize}
The   unique   true  parameter   is   $\psi_{0}^{c}   =  \Psi^{c}(P_{0,1})   =
\Psi^{c}(P_{0,2}) = \Psi^{c}(P_{0,3})$. It equals approximately 0.1204. 

For  $B=10^{3}$ and  each  $j=1,2,3$, we  repeat  independently the  following
steps:
\begin{enumerate}
\item simulate  a data set  of $N=10^{7}$ independent observations  drawn from
  $P_{0,j}$;
\item\label{item:init}  extract $n_{0}  \equiv 10^{3}$  observations  from the
  data  set  by survey  sampling  with $h_{0}\equiv  1$,  and  based on  these
  observations:
  \begin{enumerate}
  \item apply  the procedure described  in Section~\ref{subsec:continuous} and
    retrieve $D^{c} (P_{n_{0}}^{k_{n_{0}}})$;
  \item  set $f_{n_{0},1}  \equiv D^{c}  (P_{n_{0}}^{k_{n_{0}}})$  and regress
    $f_{n_{0},1} (O)^{2}$  on $V$, call  $f_{n_{0},2}$ the square root  of the
    resulting conditional expectation, see (\ref{eq:f2});
  \item   estimate  the   marginal  distribution   of  $V$,   estimate  $P_{0}
    f_{n_{0},2}$     with     $\pi_{n_{0},2}$     and    set     $h     \equiv
    f_{n_{0},2}/\pi_{n_{0},2}$;
  \end{enumerate}
\item  for each  $n$ in  $\{10^{3},  5\times 10^{3},  10^{4}, 5\times  10^{4},
  10^{5}\}$, successively, extract by survey sampling with $h$ a sub-sample of
  $n$ observations from  the data set (deprived of  the observations extracted
  in  step~\ref{item:init})  and,  based  on  these  observations,  apply  the
  procedure described in  Section~\ref{subsec:continuous}. We use $\Sigma_{n}$
  given in  (\ref{eq:Sigma:n}) to  estimate $\sigma_{1}^{2}$, although  we are
  not sure in advance that it is a conservative estimator.
\end{enumerate}
We thus obtain  $15\times B$ estimates of $\psi_{0}^{c}$  and their respective
confidence intervals.

To give an idea of what is the optimal $h$ in each case, we save the result of
step~2 in the above list in the  first of the $B$ simulations under $P_{0,1}$,
$P_{0,2}$ and $P_{0,3}$. So, the optimal $h$ equals approximately
\begin{itemize}
\item[-] $h_{1}$  given by $(h_{1} (1),  h_{1} (2), h_{1}  (3)) \approx (1.03,
  0.67, 1.21)$ under $P_{0,1}$;
\item[-] $h_{2}$  given by $(h_{2} (1),  h_{2} (2), h_{2}  (3)) \approx (0.30,
  0.60, 1.50)$ under $P_{0,2}$;
\item[-] $h_{3}$  given by $(h_{3} (1),  h_{3} (2), h_{3}  (3)) \approx (4.66,
  0.53, 0.09)$ under $P_{0,3}$
\end{itemize}
Note  how  different are  $h_{1}$,  $h_{2}$  and  $h_{3}$ (to  facilitate  the
comparisons, $h_{1}$, $h_{2}$ and $h_{3}$ are renormalized to satisfy $P_{0,j}
h_{j} = 1$ for $j=1,2,3$).

Applying  the  TMLE procedure  is  straightforward  thanks  to the  \texttt{R}
package  called  \texttt{tmle.npvi}~\cite{tmle.npvi,NPVI.app}. Note,  however,
that it  is necessary to compute $\Gamma_{n}$  and $\Sigma_{n}$. Specifically,
we fine-tune the  TMLE procedure by setting \texttt{iter}  (the maximum number
of iterations  of the  targeting step) to  7 and  \texttt{stoppingCriteria} to
\texttt{list(mic=0.01,  div=0.01, psi=0.05)}.   Moreover, we  use  the default
\texttt{flavor}  called \texttt{"learning"}, thus  notably rely  on parametric
linear  models  for  the  estimation of  the  infinite-dimensional  parameters
$Q_{P_{0}}$, $\mu_{P_{0}}$  and $g_{P_{0}}$  and their fluctuation.   We refer
the interested reader to the package's manual and vignette for details.

Sampford's  sampling method~\cite{sampford1967sampling} implements  the survey
sampling  described in  Section~\ref{sec:surv:samp}. However,  when  the ratio
$n/N$ is close to 0  or 1, this acceptance-rejection algorithm typically takes
too much time to succeed.  In our setting, this is the case when $n/N$ differs
from $10^{-3}$. To  circumvent that issue, we approximate  the survey sampling
described  in  Section~\ref{sec:surv:samp}  with a  Pareto  sampling~\cite[see
Algorithm~2 in][Section~5]{Bondesson06}. 

\begin{table}
  \centering
  \begin{tabular}{r|ccccc|ccccc}
    \multicolumn{1}{c}{}&  \multicolumn{5}{|c|}{$P_{0,1}$, optimal  $h_{1}$} &
                                                                               \multicolumn{5}{|c}{$P_{0,1}$, $h_{0}\equiv 1$} \\ \hline 
    $n$ &  b. & $p$-val.  &  c. &  v.  & e.  v. &  b. & $p$-val.  &  c. &
                                                                          v. & e. v. \\ 
    \hline 
    $1 \times 10^3$ & 0.024 & 0.018 & 0.957 & 0.946 & 1.149 & 0.025& 0.499  & 0.963 & 1.010 & 1.219 \\
    $5 \times 10^3$ & 0.011 & 0.858 & 0.971 & 0.972 & 1.199 & 0.011 & 0.320 & 0.968 & 0.981 & 1.265 \\
    $1\times 10^4$ & 0.008 & 0.948 & 0.970 & 0.961 & 1.210 & 0.008 & 0.215 & 0.964 & 1.060 & 1.277 \\
    $5 \times 10^4$ & 0.004 & 0.441 & 0.920 & 1.334 & 1.213 & 0.004 & 0.253 & 0.916 & 1.282 & 1.283 \\
    $1\times 10^5$ & 0.004 & 0.858 & 0.861 & 1.601 & 1.214 & 0.004 & 0.750 & 0.874 & 1.664 & 1.284 \\
    \multicolumn{7}{c}{}\\
                        & \multicolumn{5}{|c|}{$P_{0,2}$, optimal $h_{2}$} & \multicolumn{5}{|c}{$P_{0,2}$,
                                                                             $h_{0} \equiv 1$} \\\hline 
    $n$ &  b. & $p$-val. &   c. &  v.  & e.  v. &  b.  & $p$-val.&   c. &
                                                                          v. & e. v.\\\hline 
    $1 \times 10^3$ & 0.110 & 0.001 & 0.955 & 20.14 & 26.01 & 0.124 & 0.001 & 0.945 & 25.49 & 30.32 \\ 
    $5 \times 10^3$ & 0.045 & 0.526 & 0.986 & 16.08 & 25.84 & 0.052 & 0.156 & 0.978 & 21.46 & 32.00 \\ 
    $1\times 10^4$ & 0.032 & 0.419 & 0.991 & 16.34 & 25.83 & 0.036 & 0.686 & 0.983 & 20.69 & 32.17 \\ 
    $5 \times 10^4$ & 0.015 & 0.501 & 0.990 & 16.69 & 25.89 & 0.016 & 0.775 & 0.989 & 20.01 & 32.45 \\ 
    $1\times 10^5$ & 0.011 & 0.956 & 0.985 & 17.20 & 25.88 & 0.012 & 0.839 & 0.986 & 20.23 & 32.38 \\
    \multicolumn{7}{c}{}\\
                        & \multicolumn{5}{|c|}{$P_{0,3}$, optimal $h_{3}$} & \multicolumn{5}{|c}{$P_{0,3}$,
                                                                             $h_{0} \equiv 1$} \\\hline 
    $n$ &  b. & $p$-val.  &  c. &  v. & e.  v. &  b. & $p$-val. &  c. &
                                                                        v. & e. v.\\\hline 
    $1 \times 10^3$ & 0.229 & 0.001 & 0.987 & 86.85 & 184.2 & 0.532 & 0.001 & 0.910 & 518.5 & 549.6\\
    $5 \times 10^3$ & 0.093 & 0.242 & 0.994 & 70.15 & 175.7 & 0.181 & 0.001 & 0.994 & 264.6 & 627.7\\
    $1\times 10^4$ & 0.069 & 0.268 & 0.997 & 73.32 & 174.5 & 0.127 & 0.022 & 0.995 & 253.8 & 629.4 \\
    $5 \times 10^4$ & 0.029 & 0.085 & 1.000 & 65.54 & 174.0 & 0.055 & 0.459 & 0.999 & 228.3 & 642.7 \\
    $1\times 10^5$ & 0.022 & 0.584 & 0.998 & 73.98 & 174.2 & 0.040 & 0.054 & 1.000 & 242.7 & 644.5 \\
  \end{tabular}
  \caption{Summarizing the  results of the  simulation study. The  top, middle
    and bottom tables correspond to simulations under $P_{0,1}$, $P_{0,2}$ and
    $P_{0,3}$.  Each of them reports the empirical bias of the estimators (b.,
    $B^{-1} \sum_{b=1}^{B}  |\psi_{n,b}^{*} - \psi_{0}^{c}|$), $p$-value  of a
    Shapiro-Wilk  test  of normality  ($p$-val.),  empirical  coverage of  the
    confidence                          intervals                         (c.,
    $B^{-1}  \sum_{b=1}^{B} \1\{\psi_{0}^{c}  \in I_{n,b}\}$),  $n$ times  the
    empirical        variance       of        the       estimators        (v.,
    $n[B^{-1}       \sum_{b=1}^{B}       \psi_{n,b}^{*2}       -       (B^{-1}
    \sum_{b=1}^{B}\psi_{n,b}^{*})^{2}]$) and  empirical mean of $n$  times the
    estimated      variance      of      the     estimators      (e.       v.,
    $B^{-1} \sum_{b=1}^{B}  \Sigma_{n,b}$), for every sub-sample  size $n$ and
    for both $h$ optimal and $h = h_{0} \equiv 1$.}
  \label{tab:sim}
\end{table}

The  results are  summarized in  Table~\ref{tab:sim}.  We  first focus  on the
empirical  bias  of the  TMLE  and  $p$-values  of  the Shapiro-Wilk  test  of
normality of its distribution.  In  all settings, the empirical bias decreases
as $n$ grows (under $P_{0,1}$, the empirical biases for $n=5\times 10^{4}$ and
$n=10^{5}$ equal 0.0044 and 0.0036 when relying on $h_{1}$ or $h_{0}$).  Under
each $P_{0,j}$  and for every sub-sample  size, the empirical bias  is smaller
when relying  on $h_{j}$  than on $h_{0}$,  approximately twice  smaller under
$P_{0,3}$.  As expected due to  our choices of conditional standard deviations
of $Y$ given $(A,W)$, the empirical  bias is larger under $P_{0,3}$ than under
$P_{0,2}$  and larger  under  $P_{0,2}$ than  under  $P_{0,1}$.  Except  under
$P_{0,3}$  when relying  on $h_{0}$,  for  every $n\geq  5\times 10^{3}$,  the
$p$-values  of  the Shapiro-Wilk  test  of  normality  are coherent  with  the
convergence in law of the TMLE to a Gaussian distribution. Under $P_{0,3}$ and
when relying on $h_{0}$, there is more evidence of a departure from a Gaussian
distribution. Inspecting the  results of the simulations  studies reveals that
this is mostly due to slightly too heavy tails.

We now  focus on the  empirical coverage, empirical  variance and mean  of the
estimated variance of the TMLE. Consider  the table about the simulation under
$P_{0,1}$  first.   For  $n  \in  \{10^{3},  5\times  10^{3},  10^{4}\}$,  the
empirical coverage is satisfying when relying on both $h_{1}$ and $h_{0}$.  At
each of these sub-sample sizes, it  does seem that we achieve the conservative
estimation of $\sigma_{1}^{2}$.  However,  the empirical coverage deteriorates
sharply  for  $n   \in  \{5  \times  10^{4},  10^{5}\}$.    It  appears  that,
concomitantly, the  empirical variance  of the estimators  increases strongly.
This may be due to the fact that, here, $n$ is not that small compared to $N$,
so that neglecting the second LHS  term in (\ref{eq:decomp}) is inadequate, so
that $\sigma_{1}^{2}$ is not the  limiting variance.  In conclusion, note that
resorting to the  optimal $h$ does not  yield much gain in  terms of empirical
variance of the estimators.

We now turn  to the two remaining  tables. The first striking  feature is that
the  empirical coverage  exceeds largely  the nominal  coverage of  95\%.  The
comparison of the  empirical variance with the mean of  the estimated variance
reveals    that   we    do    achieve   the    conservative   estimation    of
$\sigma_{1}^{2}$. The second  striking feature is that  the empirical variance
stabilizes  for  $n$  larger  than  $10^3$, contrary  to  what  happens  under
$P_{0,1}$.  It still holds that $n$ may  not be small compared to $N$. Perhaps
this  is  counterbalanced  by  the   fact  that,  by  increasing  starkly  the
conditional  variance  of $Y$  given  $(A,W)$  under $P_{0,2}$  and  $P_{0,3}$
relative to $P_{0,1}$, we make $P_{0} f_{1}^{2} h^{-1}$, the first LHS term in
(\ref{eq:decomp}),    much     larger    than    the    second     LHS    term
$n(P_{0} f_{1})^{2}/N$.   Finally, resorting to  the optimal $h$  yields, both
under  $P_{0,2}$  and $P_{0,3}$,  considerable  gains  in terms  of  empirical
variance  of  the estimators  and  in  terms of  the  width  of the  resulting
confidence intervals.

\subsubsection*{Acknowledgements.}  

The  authors acknowledge  the support  of the  French Agence  Nationale  de la
Recherche (ANR), under grant ANR-13-BS01-0005 (project SPADRO).

\appendix

\section{Proof of Theorem~\ref{thm:main}}
\label{sec:proofs}

Throughout the proofs, ``$a \lesssim  b$'' means that there exists a universal
constant $L>0$ such that $a\le Lb$.

We   start  with   a  central   limit  theorem   for  the   empirical  process
$(\sqrt{n}(\PHT-P_0)f)_{f\in\xF}$.   Its proof  is given  at the  end  of this
section.      Recall    that    a     random    process     $\mathbb{G}$    in
$\ell^{\infty}(\mathcal{F})$  is  $\|\cdot\|_{2,P_{0}}$-equicontinuous if  for
each   $\xi>0$,   there  exists   $\delta>0$   such   that,   for  all   $f,f'
\in\mathcal{F}$,             $\|f-f'\|_{2,P_{0}}\le\delta$             implies
$P_0(|\mathbb{G}(f-f')|)\leq \xi$.

\begin{theorem}
  \label{thm:result_sondage}
  Under     {\bf      A1}     and     {\bf     A2}      there     exists     a
  $\|\cdot\|_{2,P_{0}}$-equicontinuous             Gaussian            process
  $\mathbb{G}^{h}\in\ell^{\infty}(\mathcal{F})$   with   covariance   operator
  $\Sigma$  such that  $(\sqrt{n}(\PHT-P_0)f)_{f\in\xF}$  converges weakly  in
  $\ell^{\infty}(\mathcal{F)}$ towards $\lG$. The same result holds with $\xF$
  replaced by $\{f - f_{1} : f \in \xF\}$.
\end{theorem}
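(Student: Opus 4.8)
The plan is to prove weak convergence in $\ell^\infty(\mathcal{F})$ by verifying the two classical ingredients -- convergence of the finite-dimensional marginals and asymptotic $\|\cdot\|_{2,P_0}$-equicontinuity -- together with total boundedness of $(\mathcal{F}, \|\cdot\|_{2,P_0})$. Total boundedness is immediate from {\bf A1}: finiteness of $J(1, \mathcal{F}, \|\cdot\|_{2,P_0})$ forces $N(\varepsilon, \mathcal{F}, \|\cdot\|_{2,P_0}) < \infty$ at every scale $\varepsilon > 0$. Granting these three facts, the standard weak-convergence criterion in $\ell^\infty(\mathcal{F})$ (finite-dimensional convergence plus asymptotic equicontinuity over a totally bounded index set) produces the tight Gaussian limit $\mathbb{G}^{h}$, whose law is pinned down by its covariance $\Sigma$ via the marginal limits. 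The closing sentence -- replacing $\mathcal{F}$ by $\{f - f_{1} : f \in \mathcal{F}\}$ -- will then follow at once, since translating every function by the fixed $f_{1}$ is a $\|\cdot\|_{2,P_0}$-isometry that leaves covering numbers and the entropy integral unchanged, preserves {\bf A2} (increments are invariant under a common shift), and keeps the moment bound in {\bf A1} finite because $(f-f_1)^2 \leq 2f^2 + 2f_1^2$ with $f_1 \in \mathcal{F}$; the limiting covariance of the shifted process is simply $(f,f') \mapsto P_0 (f-f_1)(f'-f_1)h^{-1}$.

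For the finite-dimensional convergence I would proceed in two stages, following~\cite{bertail:hal-00989585,hajek1964}. First I would treat the Poisson design: replacing $\eta_i$ by the independent $\varepsilon_i$, the process $f \mapsto \sqrt{n}(\frac{1}{N}\sum_i \frac{\varepsilon_i}{p_i}f(O_i) - P_0 f)$ is, for each fixed $f$, a normalized sum of i.i.d.\ triangular-array summands, so a Lindeberg--Feller CLT applies; the variance computation recorded just before~(\ref{eq:decomp}) identifies the limiting variance as $P_0 f^2 h^{-1} = \Sigma(f,f)$, and Cram\'er--Wold upgrades this to the joint statement for any finite collection. Second I would transfer from Poisson to rejective sampling by conditioning on $\sum_i \varepsilon_i = n$, which is precisely H\'ajek's device; this is where the centering correction is shown to be asymptotically negligible (cf.\ the remark on the absence of a centering term in $\sigma_1^2$), leaving $\Sigma$ as the covariance operator.

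The asymptotic equicontinuity is the crux and the step I expect to be the main obstacle. I would establish a maximal inequality for the increments of $\mathbb{G}_n^{h}$ by a chaining argument carried out conditionally on $(O_1,V_1), \ldots, (O_N,V_N)$, the natural metric at that stage being the empirical metric $\rho_N$ of~(\ref{eq:rhoN}). Symmetrization together with a sub-Gaussian increment bound for the conditionally independent summands yields a Dudley-type bound of the form $E[\sup_{\rho_N(f,f') < \delta}|\mathbb{G}_n^{h}(f) - \mathbb{G}_n^{h}(f')|] \lesssim J(\delta, \mathcal{F}, \rho_N)$ up to a lower-order remainder. Assumption {\bf A2} is then invoked to replace the \emph{random} entropy integral $J(\delta, \mathcal{F}, \rho_N)$ by the \emph{deterministic} $J(\delta, \mathcal{F}, \|\cdot\|_{2,P_0})$, which vanishes as $\delta \downarrow 0$ by {\bf A1}, giving equicontinuity. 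The genuinely delicate points are (i) passing the maximal inequality from the Poisson to the rejective design, since conditioning on $\sum_i \varepsilon_i = n$ destroys independence, and (ii) controlling the unbounded weights $1/p_i \asymp N/(n\,h(V_i))$, for which the lower bound $h \geq c(h) > 0$ and the moment condition $P_0 f^2 h^{-1} < \infty$ of {\bf A1} are essential. Handling (i) is where I would lean most heavily on the Horvitz--Thompson empirical-process machinery of~\cite{bertail:hal-00989585}.
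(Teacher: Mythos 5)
Your skeleton (finite-dimensional convergence via a Poisson-design CLT plus a H\'ajek transfer to the rejective design, then asymptotic equicontinuity by conditional chaining with \textbf{A2} de-randomizing the entropy integral, then total boundedness from \textbf{A1}) is the paper's skeleton, and your fidi argument is essentially the paper's: it proves a single-$f$ CLT by a coupling chain --- $Z_{N}(f)$ equal in law to $T_{N}^{s_{n}}(f)$; $\sqrt{n}(T_{N}^{s_{n}}(f)-T_{N}^{s_{K}}(f))=o_{P}(1)$ from the proof of Theorem~7.1 in \cite{hajek1964}, valid because $d_{N}=\sum_{i=1}^{N}p_{i}(1-p_{i})\to\infty$; total-variation closeness of $T_{N}^{s_{K}}(f)$ and the rescaled Poisson statistic $T_{N}^{\ell_{N}}(f)$ via Lemma~4.3 of \cite{hajek1964}; and an $L^{2}$ computation showing $\sqrt{n}(T_{N}^{\ell_{N}}(f)-T_{N}(f))=o_{P}(1)$ using $\ell_{N}\to 1$ almost surely --- followed by Cram\'er--Wold exactly as you propose. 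Your description of the transfer as ``conditioning on $\sum_{i}\varepsilon_{i}=n$'' is looser than the actual coupling, but that is a matter of detail, not of substance.

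The genuine gap is at the equicontinuity step, and you have in fact located it yourself without closing it. Your maximal inequality rests on ``a sub-Gaussian increment bound for the conditionally independent summands,'' but under the rejective design the $\eta_{i}$ are \emph{not} conditionally independent, and your fallback --- transferring the maximal inequality from the Poisson to the rejective design --- does not go through as stated: the coupling bounds above are $o_{P}(1)$ for each \emph{fixed} $f$, and upgrading them to a bound on $\sup_{f,f'\in\xF_{\delta}}|\pG(f)-\pG(f')|$ is precisely the uniformity you would need to prove, so deferring to the ``machinery'' of \cite{bertail:hal-00989585} leaves the crux unproved. The missing idea is concrete: $\eta_{1},\ldots,\eta_{N}$ are \emph{negatively associated} \cite{joag-dev1983,barbour1990}, and Hoeffding's inequality extends to bounded negatively associated variables (Theorem~S1.2 of \cite{bertail:hal-00989585}), which yields, conditionally on $O_{1},\ldots,O_{N}$, the tail bound $\prob{|\pG(f)|>t\mid O_{1},\ldots,O_{N}}\le\exp\left(-2t^{2}/\rho_{N}^{2}(f)\right)$ \emph{directly for the rejective design} --- no symmetrization, no Poissonization, no independence. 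From there the chaining runs exactly as you sketch, with \textbf{A2} giving $N(\epsilon,\xF_{\delta},\rho_{N})\le N(\epsilon/(1+a_{N}),\xF_{\delta},\|\cdot\|_{2,P_{0}})$ almost surely. Note also that this device simultaneously disposes of your worry (ii): conditionally on the data the summands $\eta_{i}f(O_{i})/p_{i}$ are bounded, so Hoeffding applies, and the weights only enter the limit covariance through $h\ge c(h)>0$ and $P_{0}f^{2}h^{-1}<\infty$ from \textbf{A1}.
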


We  now  turn  to  the  proof of  Theorem  \ref{thm:main}.   Since  $P_{n}^{*}
D(P_{n}^{*})  = 0$ (by  definition, the  influence function  $D(P_{n}^{*})$ is
centered under $P_{n}^{*}$), {\bf A4} rewrites
\begin{equation*}
  \calR(P_{n}^{*},  P_{0}) = \psi_{0}  - \psi_{n}^{*}  - P_{0}  D(P_{n}^{*}) =
  -\gamma_{n} (\psi_{n}^{*} - \psi_{0}) + o_{P}(1/\sqrt{n}),
\end{equation*}
hence 
\begin{equation*}
  (1-\gamma_{n})  \sqrt{n}  (\psi_{n}^{*}  -  \psi_{0})  =  -  \sqrt{n}  P_{0}
  D(P_{n}^{*}) + o_{P}(1). 
\end{equation*}
Moreover, (\ref{eq:targeted}) implies that the above equality also yields
\begin{eqnarray*}
  \notag
  (1-\gamma_n)\sqrt{n}(\psi_{n}^*       -       \psi_0)       &      =       &
  \sqrt{n}(\PHT - P_{0}) D(P_n^*) + o_{P} (1)\\
  & =  & \sqrt{n}(\PHT  - P_{0})  f_{1} + \sqrt{n}(\PHT  - P_{0})  (D(P_n^*) -
  f_{1}) + o_{P} (1). 
\end{eqnarray*}

Theorem~\ref{thm:result_sondage} implies in  particular that $\sqrt{n} (\PHT -
P_{0})f_{1}$  converges in  law  to the  centered  Gaussian distribution  with
variance $\Sigma(f_{1}, f_{1})$. 

Let us prove now that $\sqrt{n}(\PHT  - P_{0}) (D(P_n^*) - f_{1}) = o_{P}(1)$.
This   is   a   consequence   of  Theorem~\ref{thm:result_sondage}   and   the
concentration inequality of~\cite[][Corollary~2.2.8]{van1996weak}.

Let $\|\cdot\|_{2,\Sigma}$ be the norm on $\xF$ given by $\|f\|_{2,\Sigma}^{2}
\equiv \Sigma(f,f)$. For every $\delta > 0$, introduce
\begin{equation*}
  \xF_\delta\equiv \{f\in\xF : P_0(f-f_1)^2\le \delta^2\} \subset \xF.
\end{equation*}
The  diameter   of  $\xF_{\delta}$  wrt  $\|\cdot\|_{2,\Sigma}$   is  at  most
$\delta/\sqrt{c(h)}$.  By  \cite[][Corollary~2.2.8]{van1996weak},
\begin{eqnarray}
  \notag
  \espi{0}{\sup_{f\in \xF_{\delta}}\lG (f-f_1)} & \lesssim &  \int_0^{\delta/\sqrt{c(h)}} \sqrt{\log N 
    (\epsilon,\xF_{\delta},\|\cdot\|_{2,\Sigma})}d\epsilon\\
  \label{eq:concentration}
  & \lesssim & \int_0^{\delta/\sqrt{c(h)}} \sqrt{\log N 
    (\epsilon,\xF,\|\cdot\|_{2,\Sigma})}d\epsilon.
\end{eqnarray}
Set arbitrarily $\alpha,\beta>0$, and choose $\delta>0$ in such a way that
\begin{equation*}
  \int_0^{\delta/\sqrt{c(h)}}      \sqrt{\log      N      (\epsilon,      \xF,
    \|\cdot\|_{2,\Sigma})}d\epsilon \le \alpha\beta. 
\end{equation*}
By Markov's  inequality, (\ref{eq:concentration})  and choice of  $\delta$, it
holds that
\begin{equation*}
  \probi{0}{\sup_{f\in \xF_{\delta}}\lG (f-f_1)\ge \alpha} \le \alpha^{-1} 
  \espi{0}{\sup_{f\in \xF_{\delta}}\lG (f-f_1)} \lesssim \beta.
\end{equation*}
Hence, Theorem~\ref{thm:result_sondage} implies that, for $n$ large enough,
\begin{equation}
  \label{eq:part:one}
  \probi{0}{\sqrt{n}(\PHT-P_0)(D(P_n^*))-f_1)\ge \alpha} \le 2 \beta.  
\end{equation}
Furthermore,  by {\bf  A3},  $P_{0} (D(P_{n}^{*})  \not\in \xF_{\delta})  \leq
\beta$   for    $n$   large    enough.    Combining   this    inequality   and
(\ref{eq:part:one}) finally yields
\begin{eqnarray*}
  \probi{0}{\sqrt{n}(\PHT-P_0)(D(P_n^*))-f_1)\ge         \alpha}         &\le&
  \probi{0}{\sqrt{n}(\PHT-P_0)(D(P_n^*))-f_1)\ge  \alpha\   ,\  D(P_n^*))  \in
    \xF_\delta}\\
  & & \quad +\probi{0}{D(P_n^*)) \notin \xF_\delta}\\
  &\le& \probi{0}{\sup_{f\in \xF_\delta}\sqrt{n}(\PHT-P_0)(f-f_1)\ge \alpha}\\
  && \quad +\probi{0}{D(P_n^*)) \notin \xF_\delta} \leq 3\beta
\end{eqnarray*}
for $n$ large enough. 

Consequently, $(1-\gamma_{n})  (\psi_{n}^{*} - \psi_{0})$ converges  in law to
the    centered   Gaussian    distribution   with    variance   $\Sigma(f_{1},
f_{1})$. Applying Slutsky's lemma completes the proof.

\subsection*{Proof of Theorem \ref{thm:result_sondage}.}


The  proof  relies on  results  from~\citep{hajek1964,barbour1990}.  For  each
$f\in \xF$, define
\[
Z_{N}(f) \equiv \PHT f
\]
and
\[
\pG (f) \equiv \sqrt{n}(\PHT-P_{0})f = \sqrt{n}(Z_{N}(f)-P_{0}f).
\]
We first state and prove  the following lemma, by using \citep[][Lemma~4.3 and
Theorem~7.1]{hajek1964}:

\begin{lemma}
  \label{lem:singlef}
  For every  (measurable) real-valued function  $f$ on $\xO$ such  that $P_{0}
  f^{2}/h$  is finite, $\pG  (f)$ converges  in law  to the  centered Gaussian
  distribution        with        variance        $\sigma^{2}(f)        \equiv
  \espi{P_{0}}{f^2(O)h(V)^{-1}}$.
\end{lemma}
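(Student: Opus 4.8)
The plan is to disentangle the two independent sources of randomness --- the i.i.d.\ draw of the population $(O_{1},V_{1}),\ldots,(O_{N},V_{N})$ and the rejective selection of the sub-sample --- by conditioning on the $\sigma$-field $\mathcal{S}_{N}\equiv\sigma(O_{i},V_{i}:1\le i\le N)$. Given $\mathcal{S}_{N}$, the values $y_{i}\equiv f(O_{i})$ and the first-order inclusion parameters $p_{i}=nh(V_{i})/N$ are frozen, and $(\eta_{1},\ldots,\eta_{N})$ is exactly a rejective sample. Thus $\pG(f)$ is, conditionally, a normalized Horvitz--Thompson sum to which the rejective-sampling central limit theorem of \citep[][Theorem~7.1]{hajek1964} applies; Lemma~4.3 there supplies the comparison between the rejective law and the underlying Poisson law $\bigotimes_{i}\mathrm{Bernoulli}(p_{i})$ that is needed to read off the conditional mean and variance. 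I would first establish the conditional statement and then integrate it out.

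For the conditional step, I would check H\'ajek's regularity hypotheses $P_{0}$-almost surely. The key quantities are $d_{N}^{2}$, the rejective (H\'ajek) variance of $\PHT f$ given $\mathcal{S}_{N}$, and the requirement that no single term dominate. Because $h^{-1}\le 1/c(h)$, one has $1/p_{i}\le N/(n\,c(h))$, so each weight $y_{i}/p_{i}$ is controlled and $\sum_{i}p_{i}(1-p_{i})\to\infty$; together with $P_{0}f^{2}h^{-1}<\infty$ this yields, via the SLLN, a Lindeberg-type negligibility condition almost surely. Hence, conditionally on $\mathcal{S}_{N}$, $(\PHT f-m_{N})/d_{N}$ converges in law to the standard Gaussian, where $m_{N}$ denotes the conditional mean.

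The variance identification is essentially the computation already displayed around~(\ref{eq:decomp}). Writing out the conditional variance of $\sqrt{n}\,\PHT f$, the dominant contribution is $N^{-1}\sum_{i=1}^{N}f(O_{i})^{2}h(V_{i})^{-1}$, which converges $P_{0}$-almost surely to $P_{0}f^{2}h^{-1}=\sigma^{2}(f)$ by the SLLN; the remaining terms carry a factor $n/N\to0$ and are negligible, as is the super-population fluctuation of $N^{-1}\sum_{i}f(O_{i})$ about $P_{0}f$ (this is the ``absence of a centering term'' noted after~(\ref{eq:decomp})). So one obtains $n\,d_{N}^{2}\to\sigma^{2}(f)$ and $\sqrt{n}(m_{N}-P_{0}f)\to0$, both $P_{0}$-almost surely.

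Finally I would pass from the conditional to the unconditional limit. Having $\esp{\exp(\mathrm{i}t\,\pG(f))\mid\mathcal{S}_{N}}\to \exp(-t^{2}\sigma^{2}(f)/2)$ $P_{0}$-almost surely for every $t$, bounded convergence gives $\esp{\exp(\mathrm{i}t\,\pG(f))}\to \exp(-t^{2}\sigma^{2}(f)/2)$, which is the claimed central limit theorem. I expect the main obstacle to be the conditional step: verifying H\'ajek's hypotheses almost surely in the present regime $p_{i}\to0$, $n/N\to0$ (which is not the classical fixed-ratio regime), and correctly transferring the Poisson moments to the rejective ones through the conditioning $\sum_{i}\varepsilon_{i}=n$. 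The subsequent averaging over $\mathcal{S}_{N}$ is comparatively routine once the conditional moments are shown to converge almost surely.
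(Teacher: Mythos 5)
Your strategy is genuinely different from the paper's. The paper never conditions on the data: it first obtains an \emph{unconditional} CLT for the Poisson-sampled sum $T_N(f)=N^{-1}\sum_{i}f(O_i)\varepsilon_i/p_i$, which is elementary because the pairs $(O_i,\varepsilon_i)$ are i.i.d.\ (Step~1 of the paper's proof), and then transfers the limit to the rejective sum $Z_N(f)=\PHT f$ through H\'ajek's coupling, comparing $Z_N(f)$, $T_N^{s_n}(f)$, $T_N^{s_K}(f)$, $T_N^{\ell_N}(f)$ and $T_N(f)$ in turn (Steps~2--3). Your plan --- freeze $\mathcal{S}_N$, invoke H\'ajek's Theorem~7.1 conditionally, establish convergence of the conditional moments, and integrate the conditional characteristic function --- is the classical survey-sampling alternative, and the final integration step is in itself sound.

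The genuine gap sits at the variance identification, i.e., exactly at the step you yourself flag as ``transferring the Poisson moments to the rejective ones.'' Conditionally on the data, H\'ajek's CLT for rejective sampling does \emph{not} come with the Poisson-type variance $N^{-2}\sum_i f(O_i)^2(1-p_i)/p_i$ that your computation uses: because the sample size is fixed, the asymptotic variance of $\sum_{i\in s}f(O_i)/p_i$ under the rejective design is $\sum_i p_i(1-p_i)\left(f(O_i)/p_i-\bar{R}_N\right)^2$ with $\bar{R}_N\equiv \sum_j f(O_j)(1-p_j)/d_N$, where $d_N=\sum_i p_i(1-p_i)$ as in Proposition~\ref{pr:coupling}. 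After the $n/N^{2}$ normalization, the correction $-\bar{R}_N^{2}d_N$ is \emph{not} one of the terms carrying a factor $n/N$: since $d_N\approx n\,\overline{h}_N$, the SLLN shows it converges to $(P_0f)^{2}/P_0h$, an $O(1)$ quantity. Hence an honest execution of your conditional route produces the limit variance $P_0f^{2}h^{-1}-(P_0f)^{2}/P_0h$, which agrees with the announced $\sigma^{2}(f)$ only when $P_0f=0$; and the superpopulation fluctuation of the conditional mean contributes only $\frac{n}{N}\vari{P_{0}}{f(O)}\to 0$, so it cannot restore the missing term within your decomposition. (Sanity check: with $h\equiv1$, rejective sampling is simple random sampling without replacement, $\PHT f$ is the mean of $n$ observations that are marginally i.i.d.\ from $P_0$, and the conditional route correctly yields $\vari{P_{0}}{f(O)}$ rather than $P_0f^{2}$.) So, as written, your variance step fails for non-centered $f$; this is precisely the tension that the paper's coupling route concentrates in the claim $\sqrt{n}\bigl(T_N^{s_n}(f)-T_N^{s_K}(f)\bigr)=o_P(1)$, and any repair of your argument must confront it head-on rather than treating the rejective second moment as the Poisson one. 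A secondary, fixable overclaim: $\sqrt{n}(m_N-P_0f)\to 0$ holds in probability because $n/N\to0$, but not $P_0$-almost surely in general (by the LIL it requires $n\log\log N/N\to0$); you should state the conditional-moment convergences in probability and conclude via a subsequence argument applied to the bounded conditional characteristic functions.
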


\begin{proof}[Proof of Lemma~\ref{lem:singlef}] This is a three-step proof. 

  {\em Step~1: preliminary.}  Set arbitrarily a measurable function $f:\xO \to
  \xR$ such that $P_{0}f^{2}/h$ is finite and define
  \[
  T_{N}(f) \equiv \frac{1}{N}\sum_{i=1}^N \frac{f(O_i)\varepsilon_i}{p_i}.
  \]
  The only difference  between $T_{N} (f)$ and $Z_{N}(f)$  is the substitution
  of  $(\varepsilon_{1},  \ldots,  \varepsilon_{N})$ for  $(\eta_{1},  \ldots,
  \eta_{N})$.     Since    $(O_{1},    \varepsilon_{1}),    \ldots,    (O_{N},
  \varepsilon_{N})$ are independently sampled (from $P_{0}^{\varepsilon}$), it
  holds that $\espi{P_{0}^{\varepsilon}}{T_{N}(f)} = P_{0} f$ and
  \begin{eqnarray}
    \notag
    \vari{P_{0}^{\varepsilon}}{T_{N}(f)} & = & \frac{1}{N}
    \vari{P_{0}^{\varepsilon}}{\frac{f(O_{1})\varepsilon_{1}}{p_{1}}} \\
    \notag
    &   =  &   \frac{1}{n}  \espi{P_{0}}{\frac{f^2(O)}{h(V)}}   -  \frac{1}{N}
    \left(P_{0} f\right)^{2} \\ 
    \label{eq:arg:var}
    & = &     \frac{\sigma^2(f)}{n} + o\left(1/n\right).
  \end{eqnarray}
  Thus,  $\sqrt{n} (T_{N}(f)  - P_{0}  f)$ converges  in law  to  the centered
  Gaussian distribution with variance $\sigma^{2}(f)$. The challenge is now to
  derive another central limit theorem for $Z_{N}(f)$ from this convergence in
  law. 

  {\em Step~2:  coupling.}  The rest of  the proof mainly  hinges on coupling.
  We may assume without loss of  generality that there exist $U_1, \dots, U_N$
  independently drawn from the uniform distribution on $[0,1]$ and independent
  of  $(O_{1},  \ldots,  O_{N})$ such  that,  for  each  $1  \leq i  \leq  N$,
  $\varepsilon_{i} = \1\{U_{i} \leq  p_{i}\}$.  We now define $\ell_{N} \equiv
  n/\sum_{i=1}^N    p_i$   and,    for    each   $1    \leq    i   \leq    N$,
  $\varepsilon_i(\ell_{N}) =\1\{U_i  \le \ell_{N}  p_i\}$.  This is  the first
  coupling used in the proof.

  The second  coupling is  more elaborate.  Due  to~\citeauthor{hajek1964}, it
  gives rise to  two random subsets $s_{K}$ and $s_{n}$  of $\{1, \ldots, N\}$
  that we  characterize now, in three  successive steps.  In the  rest of this
  step of the proof, we work conditionally on $O_{1}, \ldots, O_{N}$.
  \begin{enumerate}
  \item Drawing $s_{n} \subset \{1, \ldots, N\}$: 
    \begin{enumerate}
    \item  sample  $(\eta_{1}',   \ldots,  \eta_{N}')$  from  the  conditional
      distribution  of  $(\varepsilon_{1}',  \ldots, \varepsilon_{N}')$  given
      $\sum_{i=1}^{N}  \varepsilon_{i}' =  n$ when  $\varepsilon_{1}', \ldots,
      \varepsilon_{N}'$   are   independently   drawn   from   the   Bernoulli
      distributions with parameters $\ell_{N}p_{1}, \ldots, \ell_{N}p_{N}$,
      respectively;
    \item define $s_{n} = \{1\leq i  \leq N : \eta_{i}'=1\}$ and $D_{n} \equiv
      \sum_{i \in s_{n}} (1 - \ell_{N} p_{i})$ for future use.
    \end{enumerate}
    We say simply that $s_{n}$ is  drawn from the rejective sampling scheme on
    $\{1, \ldots,  N\}$ with  parameter $(\ell_{N}p_{i} :  i \in  \{1, \ldots,
    N\})$ (see Section~\ref{sec:main}).
  \item Drawing $K \in \{1, \ldots, N\}$: 
    \begin{enumerate}
    \item sample  $\varepsilon_{1}'', \ldots, \varepsilon_{N}''$ independently
      from  the   Bernoulli  distributions  with   parameters  $\ell_{N}p_{1},
      \ldots,$ $\ell_{N}p_{N}$, respectively;
    \item define $K \equiv \sum_{i=1}^{N} \varepsilon_{i}''$.
    \end{enumerate}
  \item Drawing $s_{K} $:
    \begin{enumerate}
    \item if $K =n$, then set $s_{K} \equiv s_{n}$;
    \item if $K  > n$, then draw $s_{K-n}$ from  the rejective sampling scheme
      on    $\{1,    \ldots,    N\}    \setminus   s_{n}$    with    parameter
      $((K-n)\ell_{N}p_{i}/D_{n} :  i \in  \{1, \ldots, N\}  \setminus s_{n})$
      and set $s_{K} \equiv s_{n} \cup s_{K-n}$;
    \item if $K  < n$, then draw $s_{n-K}$ from  the rejective sampling scheme
      on $s_{n}$ with parameter $((K-n)\ell_{N}p_{i}/D_{n} : i \in s_{n})$ and
      set $s_{K} \equiv s_{n} \setminus s_{n-K}$.
    \end{enumerate}
  \end{enumerate}

  We denote  by $\xS$ the joint  law of $(s_K,s_n)$. Obviously,  $\xS$ is such
  that $s_K \subset  s_n$ or $s_n \subset s_K$  $\xS$-almost surely. We denote
  by $\xP$ the law of the Poisson sampling scheme, {\em i.e.}, the law of $\{1
  \leq i \leq N : \varepsilon_{i}'=1\}$ from the description of how $s_{n}$ is
  drawn.  Law  $\xS$ is  a coupling  of the rejective  sampling scheme  and an
  approximation  to the  Poisson sampling  scheme $\xP$  in the  sense  of the
  following corollary of~\cite[][Lemma~4.3]{hajek1964}.
  \begin{proposition}[Hajek]
    \label{pr:coupling}
    If $d_N  \equiv \sum_{i=1}^N p_i(1-p_i)$ goes  to infinity as  $N$ goes to
    infinity, then the marginal  distribution of $s_{K}$ when $(s_{K}, s_{n})$
    is drawn from $\xS$ converges to $\xP$ in total variation.
  \end{proposition}
  The condition on $d_N$ is met for our choice of $(p_1, \ldots, p_{N})$.

  {\em Step~3: concluding.} Introduce 
  \begin{eqnarray*}
    T_{N}^{\ell_{N}}(f)        &         \equiv        &        \frac{1}{N}\sum_{i=1}^N
    \frac{f(O_i)\varepsilon_i(\ell_{N})}{\ell_{N} p_i}, \\ 
    T_{N}^{s_{K}}(f) & \equiv & \sum_{i \in s_K} \frac{f(O_i)}{p_i},\\
    T_{N}^{s_{n}}(f) & \equiv  & \sum_{i \in s_n} \frac{f(O_i)}{p_i}.
  \end{eqnarray*}
  The random variables  $Z_{N} (f)$, $T_{N}^{\ell_{N}}(f)$, $T_{N}^{s_{K}}(f)$
  and $T_{N}^{s_{n}}(f)$ satisfy the following properties.
  \begin{itemize}
  \item $Z_{N} (f)$ and $T_{N}^{s_{n}} (f)$ share a common law.\\
    This is a straightforward consequence of Proposition~\ref{pr:coupling}.
  \item $\sqrt{n} (T_{N}^{s_{n}} (f) - T_{N}^{s_{K}} (f)) = o_{P} (1)$.\\
    Indeed, it is  shown in the proof of  \cite[][Theorem 7.1]{hajek1964} that
    the  convergence of  $d_N$ (defined  in  Proposition~\ref{pr:coupling}) to
    infinity  implies,  conditionally  on  $O_{1}, \ldots,  O_{N}$,  $\sqrt{n}
    (T_{N}^{s_{K}}(f)  -  T_{N}^{s_{n}}(f)) =  o_{P}  (1)$. The  unconditional
    result readily follows.
  \item $T_{N}^{s_{K}} (f)$ and $T_{N}^{\ell_{N}} (f)$ have asymptotically the
    same law, in the sense that the total variation distance between their
    laws goes to 0 as $N$ goes to infinity.\\
    This is a consequence of Proposition~\ref{pr:coupling}.
  \item $\sqrt{n} (T_{N}^{\ell_{N}} (f) - T_{N} (f)) = o_{P} (1)$.\\
    It   suffices   to    show   that   $\espi{P_{0}^{\varepsilon}}{(\T(f)   -
      T_{N}^{\ell_{N}}(f))^2} = o(1/n)$. Observe now that
    \begin{eqnarray*}
      n\espi{P_{0}^{\varepsilon}}{\left(T_{N}^{\ell_{N}}(f)-\T(f)\right)^2} & =
      & \frac{n}{N} 
      \espi{P_{0}^{\varepsilon}}{\left(\frac{\varepsilon_{1}(\ell_{N})}{\ell_{N}}
          - \varepsilon_{1}\right)^2 \frac{f(O_{1})^2}{p_{1}^2}}\\
      & = & 
      \espi{P_{0}^{\varepsilon}}{\left(\frac{1}{\ell_{N}}   -   2   \frac{\min
            (1,\ell_{N})}{\ell_{N}}+1\right) \frac{f(O_{1})^2}{h(V_{1})}}.  
    \end{eqnarray*}
    The  strong law of  large numbers  yields that  $\ell_{N}$ converges  to 1
    almost surely, which implies $1/\ell_{N} - 2 \min (1,\ell_{N})/\ell_{N}+1$
    converges to 0 almost surely hence the result by the dominated convergence
    theorem.
  \end{itemize}
  Consequently,  $\pG(f) \equiv  \sqrt{n} (Z_{N}(f)  - P_{0}f)$  and $\sqrt{n}
  (T_{N} (f) - P_{0} f)$ have  asymptotically the same law. The same arguments
  are valid when $\{f  - f_{1} : f \in \xF\}$ is  substituted for $\xF$. Thus,
  the proof is complete.
\end{proof}

We  can  now  prove  Theorem~\ref{thm:result_sondage}.   We  first  note  that
Lemma~\ref{lem:singlef}  implies the asymptotic  tightness of  the real-valued
random    variable    $\pG(f)$   for    all    $f    \in   \xF$.     Moreover,
Lemma~\ref{lem:singlef} and the Cram\'er-Wold  device yield the convergence in
law  of  $(\pG  f_1,\dots,\pG  f_M)$  to $(\lG  f_1,\dots,\lG  f_M)$  for  all
$(f_1,\dots,f_M) \in \xF^{M}$.  Indeed, for each $(f_1,\dots,f_M) \in \xF^{M}$
and  any $(\lambda_{1},  \ldots,  \lambda_{M}) \in  \xR^{M}$, $\bar{f}  \equiv
\sum_{m=1}^{M} \lambda_{m}  f_{m}$ is measurable and  $P_{0} \bar{f}^{2}/h$ is
finite   hence,   by   Lemma~\ref{lem:singlef},  $\sum_{m=1}^{M}   \lambda_{m}
\pG{f_{m}} = \pG(\bar{f})$ converges  in law to $\lG(\bar{f}) = \sum_{m=1}^{M}
\lambda_{m} \pG{f_{m}}$.  In  addition, {\bf A1} implies that  the diameter of
$\xF$  wrt $\|\cdot\|_{2,P_{0}}$  is  finite.  
Therefore,  by  \cite[][Theorems 1.5.4  and  1.5.7]{van1996weak},  if for  all
$\alpha,\beta>0$, there exists $\delta>0$ such that
\begin{equation}
  \label{eq:equicontinuity}
  \limsup_{N\to\infty}    \prob{\sup_{f,f'    :    \|f-f'\|_{2,P_{0}}<\delta}
    \left|\pG f - \pG f'\right|>\alpha} \leq \beta,
\end{equation}
then Theorem~\ref{thm:result_sondage} is valid. 

Set arbitrarily $\alpha, \beta,  \delta> 0$ and introduce $\xF_{\delta} \equiv
\{f-f'  : f,f'\in\xF,  \|f  -  f'\|_{2,P_{0}}\le \delta\}$.   It  is shown  in
\cite{joag-dev1983} (see also \cite{barbour1990}) that $\eta_1, \dots, \eta_N$
are {\em  negatively associated} in the  following sense.  For  each $A_1, A_2
\subset  \{1,  \ldots,  N\}$ with  $A_{1}  \cap  A_{2}  = \emptyset$  and  all
(measurable)  $f:\xR^{d_{1}}  \to \xR$  and  $g:\xR^{d_{2}}  \to \xR$  ($d_{1}
\equiv \text{card}(A_{1})$ and $d_{2}  \equiv \text{card}(A_{2})$), if $f$ and
$g$ are increasing in every coordinate, then
\begin{equation*}
  \text{cov} \left(f(\eta_i : i\in A_1),g(\eta_i : i\in A_2)\right) \le 0.
\end{equation*}

Hoeffding's inequality  for negatively associated bounded  random variables in
\cite[][Theorem S1.2]{bertail:hal-00989585}  guarantees that, conditionally on
$O_1, \dots, O_N$, for all $t>0$,
\begin{equation*}
  \prob{|\pG (f)|> t \middle|O_1,\dots,O_N} \le \exp
  \left(-\frac{2t^2}{\rho_N^2(f)}\right). 
\end{equation*}
Therefore,  a   classical  chaining  argument   \cite[][Corollary  2.2.8,  for
instance]{van1996weak})  yields
\begin{equation}
  \label{eq:chainingEmp}
  \esp{\sup_{f,  f'\in \xF_\delta}|\pG  (f) -  \pG (f')|\middle|O_1,\dots,O_N}
  \lesssim\int_0^\infty \sqrt{\log N(\epsilon,\xF_\delta,\rho_N)} d\epsilon.  
\end{equation}
By  {\bf A2},  there  exists  a deterministic  sequence  $\{a_N\}_{N \geq  1}$
tending  to   0  such   that,  for  all   $f,g  \in  \xF$,   $\rho_N(f,g)  \le
(1+a_N)\rho(f,g)$ $P_{0}$-almost  surely. Consequently, for  every $\epsilon >
0$, it holds $P_{0}$-almost surely that
\begin{equation*}
  N(\epsilon,   \xF_\delta,   \rho_N)   \le  N(\epsilon/(1+a_N),   \xF_\delta,
  \|\cdot\|_{2,P_{0}}). 
\end{equation*}
Plugging  the  previous  upper-bound  in  (\ref{eq:chainingEmp}),  taking  the
expectation, using  Markov's inequality  and letting $N$  go to  infinity then
give
\begin{eqnarray*}
  \limsup_{N \to \infty} \prob{\sup_{f,f' \in \xF_{\delta}}\left|\pG (f) - \pG
      (f')\right|   >   \alpha}   &\lesssim&   \alpha^{-1}\int_0^\infty
  \sqrt{\log N(\epsilon,\xF_\delta,\|\cdot\|_{2,P_{0}})} d\epsilon\\
  &\lesssim & \alpha^{-1} J(\delta, \xF, \|\cdot\|_{2,P_{0}}).  
\end{eqnarray*}
By {\bf A1},  it is possible to choose $\delta>0$ small  enough to ensure that
the    above    RHS    expression    is   smaller    then    $\beta$,    hence
(\ref{eq:equicontinuity}) holds.

It only remains to determine the covariance of $\lG$. By adapting the proof of
Lemma \ref{lem:singlef}, it appears that $\text{cov}(\lG (f),\lG (f')) = P_{0}
ff'/h = \Sigma(f,f')$ for all $f,f' \in \xF$.

\section{
  Tailoring the main theorem in the setting of Section~\ref{subsec:binary}}
\label{sec:adapting:thms}


Let us show that {\bf A1$^b$} and {\bf A2$^b$} imply {\bf A1}--{\bf A4} in the
setting  of  Section~\ref{subsec:binary}.   Since  $\xQ^w$ and  $\xG^{w}$  are
uniformly  bounded away from  0 and  1, $t_n$  \eqref{eq:optShift} necessarily
belongs to a deterministic, compact subset $\xT$ of $\xR$. Define
\begin{equation*}
  \widetilde{\xQ}^w \equiv \left\{\expit \left(\logit Q + t \frac{2A-1}{g(A|W)}\right):
    Q\in \xQ^w, g\in \xG^w, t\in \xT\right\} 
\end{equation*}
then
\begin{equation*}
  \xF \equiv  \{D(P) : P\in \xM  \text{ s.t. }  Q_P\in \widetilde{\xQ}^w, g_P
  \in \xG^w\}. 
\end{equation*}
Obviously, $D(P_n^*) \in \xF$  and $\sup_{f \in \xF}\|f\|_{\infty}$ is finite.
Furthermore, because  $\expit$ is a  1-Lipschitz and $\logit$ is  Lipschitz on
any compact  subset of $(0,1)$,  it holds that  $\widetilde{Q}, \widetilde{Q}'
\in  \widetilde{\xQ}^w$   respectively  parametrized   by  $(Q,  g,   t)$  and
$(Q',g',t')$ satisfy
\begin{equation*}
  \|\widetilde{Q}-\widetilde{Q}'\|_{2,P_0}                             \lesssim
  \|Q-Q'\|_{2,P_0}+\|g-g'\|_{2,P_0}+|t-t'|. 
\end{equation*}
Therefore,   the    finiteness   of   $J(1,    \xQ^w,   \|\cdot\|_{2,P_{0}})$,
$J(1,  \xG^w,  \|\cdot\|_{2,P_{0}})$  and  $J(1, \xT,  |\cdot|)$  implies  the
finiteness of  $J(1, \widetilde{\xQ}^w, \|\cdot\|_{2,P_{0}})$.   Moreover, the
separability of  $\xQ^{w}$ and $\xG^{w}$ yields  that $\widetilde{\xQ}^{w}$ is
also separable.

Furthermore, for every $P, P'\in \xM$  such that $D^b(P), D^b(P') \in \xF$, it
holds that
\begin{equation}
  \label{eq:tech:bin}
  \|D^{b} (P) - D^{b} (P')\|_{2,P_{0}} \lesssim \|Q_{P} - Q_{P'}\|_{2,P_{0}}
  + \|g_{P} - g_{P'}\|_{2,P_{0}} + |\Psi^{b} (P) - \Psi^{b} (P')|.
\end{equation}
We will prove this at the end of the section.
By   (\ref{eq:tech:bin}),  the   separability  of   $\widetilde{\xQ}^{w}$  and
$\xG^{w}$ implies that of~$\xF$.
In             addition,             the             finiteness             of
$J(1,                \widetilde{\xQ}^{w},               \|\cdot\|_{2,P_{0}})$,
$J(1,   \xG^{w},    \|\cdot\|_{2,P_{0}})$,   $J(1,   [0,1],    |\cdot|)$   and
(\ref{eq:tech:bin}) imply that $J(1, \xF, \|\cdot\|_{2,P_{0}})$ is finite.  We
prove likewise  based on (\ref{eq:tech:bin})  that $\xF$ has a  finite uniform
entropy   integral    because   $\xQ^{w}$   and   $\xG^{w}$    do.    Finally,
\eqref{eq:verifRasympt} and  {\bf A2$^b$} imply {\bf  A3} (by Cauchy-Schwarz's
inequality) and {\bf A4}.

\begin{proof}[Proof of (\ref{eq:tech:bin})]
  For  any  $P\in   \xM$,  denote  $q_P(W)=Q_P(1,W)-Q_P(0,W)$.   Set  $P,P'\in
  \xM^b$. It holds that
  \begin{align*}
    \|D^b_1(P)-D^b_1(P')\|_{2,P_0}    &     \le    \|q_P-q_{P'}\|_{2,P_0}    +
    |\Psi^b(P)-\Psi^b(P')|. 
  \end{align*}
  Moreover, 
  \begin{align*}
    \|D^b_2(P)-D^b_2(P')\|_{2,P_0}                     &                     =
    \left\|(Y-q_P(W))\frac{2A-1}{g_P}-(Y-q_{P'}(W))\frac{2A-1}{g_{P'}}\right\|_{2,P_0}\nonumber\\ 
    &                                                                       \le
    \left\|(Y-q_P(W))(2A-1)\left(\frac{1}{g_P}-\frac{1}{g_{P'}}\right)\right\|_{2,P_0}+\left\|(q_P-q_{P'})\frac{2A-1}{g_{P'}(W)}\right\|_{2,P_0}\nonumber\\ 
    & \lesssim \|g_P-g_{P'}\|_{2,P_0} + \|q_P-q_{P'}\|_{2,P_0},
  \end{align*}
  where   the  last   inequality  relies   on  the   uniform   boundedness  of
  $(Y-q_P(W))(2A-1)$    and   $g_P^{-1}$.     The    result   follows    since
  $\|q_P-q_{P'}\|_{2,P_0} \le 2 \|Q-Q'\|_{2,P_0}$.
\end{proof} 

The same kind of arguments allow  to verify that {\bf A1$^c$} and {\bf A2$^c$}
also imply {\bf A1}--{\bf A4}.

\bibliographystyle{plainnat}

\bibliography{biblio}

\begin{thebibliography}{26}
\providecommand{\natexlab}[1]{#1}
\providecommand{\url}[1]{\texttt{#1}}
\expandafter\ifx\csname urlstyle\endcsname\relax
  \providecommand{\doi}[1]{doi: #1}\else
  \providecommand{\doi}{doi: \begingroup \urlstyle{rm}\Url}\fi

\bibitem[Barbour(1990)]{barbour1990}
A.~D. Barbour.
\newblock Poisson approximation and the {C}hen-{S}tein method.
\newblock \emph{Statistical Science}, 5\penalty0 (4):\penalty0 425--427, 1990.

\bibitem[Berger(1998)]{berger1998rate}
Y.~Berger.
\newblock Rate of convergence to normal distribution for the
  {H}orvitz-{T}hompson estimator.
\newblock \emph{Journal of Statistical Planning and Inference}, 67\penalty0
  (2):\penalty0 209--226, 1998.

\bibitem[Bertail et~al.(2016)Bertail, Chautru, and Cl{\'e}men{\c
  c}on]{bertail:hal-00989585}
P.~Bertail, E.~Chautru, and S.~Cl{\'e}men{\c c}on.
\newblock {Empirical processes in survey sampling}.
\newblock \emph{Scandinavian Journal of Statistics}, October 2016.
\newblock To appear.

\bibitem[Bondesson et~al.(2006)Bondesson, Traat, and Lundqvist]{Bondesson06}
L.~Bondesson, I.~Traat, and A.~Lundqvist.
\newblock Pareto sampling versus {S}ampford and conditional {P}oisson sampling.
\newblock \emph{Scandinavian Journal of Statistics. Theory and Applications},
  33\penalty0 (4):\penalty0 699--720, 2006.

\bibitem[Breslow and Wellner(2007)]{breslow2007weighted}
N.~E. Breslow and J.~A. Wellner.
\newblock Weighted likelihood for semiparametric models and two-phase
  stratified samples, with application to {C}ox regression.
\newblock \emph{Scandinavian Journal of Statistics}, 34\penalty0 (1):\penalty0
  86--102, 2007.

\bibitem[Breslow and Wellner(2008)]{Breslow08}
N.~E. Breslow and J.~A. Wellner.
\newblock A {Z}-theorem with estimated nuisance parameters and correction note
  for ``{W}eighted likelihood for semiparametric models and two-phase
  stratified samples, with application to {C}ox regression''.
\newblock \emph{Scandinavian Journal of Statistics}, 35, 2008.

\bibitem[Brewer and Donadio(2003)]{brewer2003high}
K.~R.~W. Brewer and M.~E. Donadio.
\newblock The high entropy variance of the {H}orvitz-{T}hompson estimator.
\newblock \emph{Survey Methodology}, 29\penalty0 (2):\penalty0 189--196, 2003.

\bibitem[Cardot et~al.(2013{\natexlab{a}})Cardot, Degras, and
  Josserand]{cardot2013confidence}
H.~Cardot, D.~Degras, and E.~Josserand.
\newblock Confidence bands for {H}orvitz--{T}hompson estimators using sampled
  noisy functional data.
\newblock \emph{Bernoulli}, 19\penalty0 (5A):\penalty0 2067--2097,
  2013{\natexlab{a}}.

\bibitem[Cardot et~al.(2013{\natexlab{b}})Cardot, Dessertaine, Goga, Josserand,
  and Lardin]{CARDEGOJOLA2013}
H.~Cardot, A.~Dessertaine, C.~Goga, E.~Josserand, and P.~Lardin.
\newblock Comparison of different sample designs and construction of confidence
  bands to estimate the mean of functional data: An illustration on electricity
  consumption.
\newblock \emph{Survey Methodology/Techniques d'enqu\^etes}, 39:\penalty0
  283--301, 2013{\natexlab{b}}.

\bibitem[Chambaz and Neuvial(2015)]{NPVI.app}
A.~Chambaz and P.~Neuvial.
\newblock tmle.npvi: targeted, integrative search of associations between {DNA}
  copy number and gene expression, accounting for {DNA} methylation.
\newblock \emph{Bioinformatics}, 31\penalty0 (18):\penalty0 3054--3056, 2015.

\bibitem[Chambaz and Neuvial(2016)]{tmle.npvi}
A.~Chambaz and P.~Neuvial.
\newblock \emph{Targeted Learning of a Non-Parametric Variable Importance
  Measure of a Continuous Exposure}, 2016.
\newblock URL \url{http://CRAN.R-project.org/package=tmle.npvi}.
\newblock R package version 0.10.0.

\bibitem[Chambaz et~al.(2012)Chambaz, Neuvial, and van~der Laan]{NPVI}
A.~Chambaz, P.~Neuvial, and M.~J. van~der Laan.
\newblock Estimation of a non-parametric variable importance measure of a
  continuous exposure.
\newblock \emph{Electronic Journal of Statistics}, 6:\penalty0 1059--1099,
  2012.

\bibitem[Grafstr{\"o}m(2010)]{grafstrom2010entropy}
A.~Grafstr{\"o}m.
\newblock Entropy of unequal probability sampling designs.
\newblock \emph{Statistical Methodology}, 7\penalty0 (2):\penalty0 84--97,
  2010.

\bibitem[Hajek(1964)]{hajek1964}
J.~Hajek.
\newblock Asymptotic theory of rejective sampling with varying probabilities
  from a finite population.
\newblock \emph{The Annals of Mathematical Statistics}, 35\penalty0
  (4):\penalty0 1491--1523, 12 1964.

\bibitem[Hanif and Brewer(1980)]{hanif1980sampling}
M.~Hanif and K.~R.~W. Brewer.
\newblock Sampling with unequal probabilities without replacement: a review.
\newblock \emph{International Statistical Review/Revue Internationale de
  Statistique}, pages 317--335, 1980.

\bibitem[Joag-Dev and Proschan(1983)]{joag-dev1983}
K.~Joag-Dev and F.~Proschan.
\newblock Negative association of random variables with applications.
\newblock \emph{The Annals of Statistics}, 11\penalty0 (1):\penalty0 286--295,
  03 1983.

\bibitem[Pearl(2000)]{Pearl00}
J.~Pearl.
\newblock \emph{Causality: models, reasoning and inference}, volume~29.
\newblock Cambridge University Press, 2000.

\bibitem[Sampford(1967)]{sampford1967sampling}
M.~R. Sampford.
\newblock On sampling without replacement with unequal probabilities of
  selection.
\newblock \emph{Biometrika}, 54\penalty0 (3-4):\penalty0 499--513, 1967.

\bibitem[van~der Laan(2006)]{vanderLaan06}
M.~J. van~der Laan.
\newblock Statistical inference for variable importance.
\newblock \emph{International Journal of Biostatistics}, 2, 2006.

\bibitem[van~der Laan(2016)]{ULFM16}
M.~J. van~der Laan.
\newblock One-step targeted minimum loss-based estimation based on universal
  least favorable one-dimensional submodels.
\newblock \emph{International Journal of Biostatistics}, 2016.
\newblock To appear.

\bibitem[van~der Laan and Lendle(2014)]{onlineTMLE}
M.~J. van~der Laan and S.~D. Lendle.
\newblock Online targeted learning.
\newblock Technical Report 330, U.C. Berkeley Division of Biostatistics, 2014.
\newblock URL \url{http://biostats.bepress.com/ucbbiostat/paper330}.

\bibitem[van~der Laan and Rose(2011)]{TMLEbook}
M.~J. van~der Laan and S.~Rose.
\newblock \emph{Targeted learning}.
\newblock Springer, 2011.
\newblock ISBN 978-1-4419-9781-4.

\bibitem[van~der Laan and Rubin(2006)]{vanderLaan&Rubin06}
M.~J. van~der Laan and D.~Rubin.
\newblock Targeted maximum likelihood learning.
\newblock \emph{International Journal of Biostatistics}, 2:\penalty0 Art. 11,
  40, 2006.
\newblock \doi{10.2202/1557-4679.1043}.

\bibitem[van~der Vaart(1998)]{vdV98}
A.~W. van~der Vaart.
\newblock \emph{Asymptotic statistics}, volume~3 of \emph{Cambridge Series in
  Statistical and Probabilistic Mathematics}.
\newblock Cambridge University Press, Cambridge, 1998.

\bibitem[van Der~Vaart and Wellner(1996)]{van1996weak}
A.~W. van Der~Vaart and J.~A. Wellner.
\newblock \emph{Weak Convergence and empirical processes}.
\newblock Springer, 1996.

\bibitem[Wang(2012)]{wang2012sample}
J.~C. Wang.
\newblock Sample distribution function based goodness-of-fit test for complex
  surveys.
\newblock \emph{Computational Statistics \& Data Analysis}, 56\penalty0
  (3):\penalty0 664--679, 2012.

\end{thebibliography}

\end{document}